\newtheorem{theorem}{Theorem}
\newtheorem{lemma}{Lemma}
\newtheorem{prop}{Proposition}
\newtheorem{cor}{Corollary}
\newtheorem{defi}{Definition}
\let \leq \leqslant
\let \geq \geqslant
\let \epsilon \varepsilon
\let \hat \widehat
  \noindent \textit{\textbf{Demonstration\/}} : }{\null \hfill $\Box$ \par }
\newcommand\pdiff[2]{\ensuremath{\frac{\partial #1}{\partial #2}}}
\newcommand\diff[2]{\ensuremath{\frac{d #1}{d #2}}}
\newcommand{\be}{\begin{equation}}
\newcommand{\ee}{\end{equation}}
\newcommand{\ben}{\begin{equation*}}
\newcommand{\een}{\end{equation*}}
\newcommand{\bery}{\begin{eqnarray}}
\newcommand{\eery}{\end{eqnarray}}
\title{Spatial and Modal Superconvergence of the Discontinuous Galerkin Method for Linear Equations}
\author{N.~Chalmers\footnote{Email: bnachalm@uwaterloo.ca} and L.~Krivodonova\footnote{Email: lgk@uwaterloo.ca Corresponding author.}}
\date{}
\begin{document}
\maketitle
\begin{abstract}
We apply the discontinuous Galerkin finite element method with a degree $p$ polynomial basis to the linear advection equation and derive a PDE which the numerical solution solves exactly. We use a Fourier approach to derive polynomial solutions to this PDE and show that the polynomials are closely related to the $\frac{p}{p+1}$ Pad\'e approximant of the exponential function. We show that for a uniform mesh of $N$ elements there exist $(p+1)N$ independent polynomial solutions, $N$ of which can be viewed as physical and $pN$ as non-physical. We show that the accumulation error of the physical mode is of order $2p+1$. In contrast, the non-physical modes are damped out exponentially quickly. We use these results to present a simple proof of the superconvergence of the DG method on uniform grids as well as show a connection between spatial superconvergence and the superaccuracies in dissipation and dispersion errors of the scheme. Finally, we show that for a class of initial projections on a uniform mesh, the superconvergent points of the numerical error tend exponentially quickly towards the downwind based Radau points.
\end{abstract}



\section{Introduction}
In this paper we apply the discontinuous Galerkin (DG) method with a polynomial basis to the one-dimensional linear hyperbolic problem
\begin{equation} \label{eq:linear}
u_t + a u_x = 0
\end{equation}
with $a > 0$ constant, subject to periodic boundary conditions on interval $I$ and sufficiently smooth initial data $u(x,0)$. We derive a partial differential equation (PDE) on the $j$-th cell which is solved by the polynomial numerical solution $U_j$ exactly. This PDE is similar to the original advection equation with a forcing term. Then, by applying classical Fourier analysis, we find polynomial solutions to this PDE. These solutions are closely related to both the $(p+1)$-th right Radau polynomial $R_{p+1}^-$ and the $\frac{p}{p+1}$ Pad\'e approximant of the exponential function $e^z$, where $p$ is the degree of the polynomial approximation. We use these particular polynomial solutions to show that for a uniform computational mesh of $N$ elements there exist $(p+1)N$ independent polynomial solutions, $N$ of which can be seen as physical and $pN$ as non-physical. Furthermore, we prove a property of the superconvergent points of the DG method conjectured by Biswas et al \cite{BDF94} in 1994. Specifically, we show that for a class of initial projections on a uniform mesh the numerical error will tend to a superconvergent form with order $p+2$ at the right Radau points. Moreover, we establish conditions on the projection of the initial data for which this property holds. We also show that this superconvergent form has high-order accuracy in the moments of the solution, i.e., the $L^2$ projection of the numerical solution onto the $m$-th Legendre polynomial is order $2p+1-m$ accurate. In particular, the cell average of the solution is advected with order $2p+1$ accuracy. 

Superconvergence of the DG method for one-dimensional problems has been studied in several papers. Following the conjecture made by Biswas et al, Adjerid et al in \cite{adjerid01a} proved order $(p+2)$ convergence of the DG solution at the downwind-based Radau points and order $2p+1$ convergence at the downwind end of each cell for ODEs. An order $(p+ \frac{3}{2})$ convergence rate of the DG solution to a particular projection of the exact solution was later shown by Cheng and Shu in \cite{ChengShu08,ChengShu10}. Yang and Shu then showed the same superconvergence property with order $p+2$ convergence for linear hyperbolic equations in \cite{yangshu2012}. Fourier analysis of the DG solution has also been applied to the DG solution in order to investigate superconvergence by symbolically manipulating the discretetization matrices for low order ($p=1,2,$ and 3) approximations \cite{zhongshu2011,guo2013}. 

Likewise, the connection between the DG scheme and the Pad\'e approximants of the exponential function has been observed in several works. The stability region of the DG method for ODEs was demonstrated by Le Saint and Raviart \cite{LeSR74} to be given by $|R(\lambda h)| \leq 1$ where $R(z)$ is the $\frac{p}{p+1}$ Pad\'e approximant of $e^z$ and $h$ is the grid spacing. In \cite{HuAtkins02}, Hu and Atkins conjectured that certain polynomials involved in the analysis of the numerical dispersion relation are related to $\frac{p+1}{p}$ Pad\'{e} approximant of $\exp(z)$ and used this to show that the numerical dispersion relation is accurate to $(kh)^{2p+2}$, where $k$ is the wavenumber. This conjecture was proven and an extended analysis of the dispersion and dissipation errors was given by Ainsworth in \cite{Ainsworth04}. Later, a connection between the spectrum of the DG method on linear problems and the Pad\'e approximant of the exponential was investigated by Krivodonova in Qin in \cite{KQ13}. In this paper, we demonstrate that the numerical solutions of the DG method are themselves closely related to this Pad\'e approximant and furthermore both the superconvergent local errors and superaccurate errors in dissipation and dispersion of the method can be seen as resulting from the accuracy of this Pad\'e approximant. We also use these numerical solutions to present a new proof of spatial superconvergence of the DG method on uniform grids. This proof is simpler than existing ones \cite{ChengShu08,ChengShu10} and does not require the assumption that the initial projection is superconvergent at the beginning of the computation. Moreover, we show what condition the initial projection should satisfy for the error to become and remain superconvergent at the right Radau points after sufficient time. 

The remainder of this paper is organized as follows. In Section 2 we begin by deriving the DG method and use the formulation to write a PDE which is solved by the numerical solution $U_j$ on the $j$-th cell. The main body of this paper is in Section 3 where by decomposing the numerical solution into Fourier modes, we use this PDE to find particular numerical solutions and establish our main results. These results are then illustrated  numerically in Section 4. Finally, conclusions and thoughts on future work are discussed in Section 5. 

\section{DG Method}

Our goal in this section is to derive a partial differential equation for the numerical solution of the DG method. We begin with the linear conservation law \eqref{eq:linear}. The domain is discretized into mesh elements $I_j=[x_{j},x_{j+1}]$ of size $h_j = x_{j+1}-x_j$, $j=1,2,...,N$. The discontinuous Galerkin spatial discretization on cell $I_j$ is obtained by approximating the exact solution $u$ by a polynomial of degree $p$, $U_j$. Multiplying (\ref{eq:linear}) by a test function $V\in \mathcal P_p$ and integrating the result on $I_j$ we obtain
\begin{equation}\label{eq:dg}
\frac{d}{dt} \int_{x_{j}}^{x_{j+1}} U_j V\,dx + \int_{x_{j}}^{x_{j+1}} a\pdiff{U_j}{x} V \, dx = 0,\quad \forall V\in \mathcal P_p.
\end{equation}
Here, $\mathcal P_p $ is a finite dimensional space of polynomials of degree up to $p$. Transforming $[x_{j},x_{j+1}]$ to the canonical element $[-1,1]$ by a linear mapping
\begin{equation}\label{eq:map}
x(\xi)= \frac{x_{j}+x_{j+1}} 2 + \frac {h_j} 2 \xi
\end{equation}
yields
\begin{equation}\label{eq:err2}
 \frac d {dt} \int_{-1}^{1} U_j V\,d\xi + \frac{2}{h_j} \int_{-1}^{1} a\pdiff{U_j}{\xi} V \, d\xi = 0,\quad \forall V\in \mathcal P_p.
\end{equation}
Integrating the flux term by parts we obtain,
\[
\frac d {dt} \int_{-1}^{1} U_j V\,d\xi + \frac{2a}{h_j}\left[U_{j}(1)V(1) - U_{j-1}(1)V(-1)\right] - \frac{2a}{h_j}\int_{-1}^{1} U_j \pdiff{V}{\xi} \, d\xi = 0,\quad \forall V\in \mathcal P_p.
\]
Note that we have chosen an upwind flux so that the value of $U$ at the cell interface is chosen to be the value on the left side of the discontinuity. Integrating by parts once more we can write,
\begin{equation}
\frac d {dt} \int_{-1}^{1} U_j V\,d\xi  + \frac{2a}{h_j}\int_{-1}^{1} \pdiff{U_j}{\xi} V \, d\xi = - \frac{2a}{h_j}[[U_{j}]]V(-1),\quad \forall V\in \mathcal P_p.
\label{eq:err3}
\end{equation}
where $[[U_{j}]] = U_{j}(-1) - U_{j-1}(1)$ denotes the jump between the endpoints of the numerical solution at the interface of the $j$-th and $(j-1)$-th cells. Note that the second integral in this expression is entirely local, as opposed to the term in \eqref{eq:err2}.  Next, we choose the Legendre polynomials as the basis for the finite element space $\mathcal P_p$. Recall \cite{Abram65}, that the Legendre polynomials $P_k(\xi)$, $k=0,1,2,\dots$, form an orthogonal system on $[-1,1]$
\begin{equation}\label{eq:legendre}
\int_{-1}^{1} P_kP_i \,d\xi = \frac {2}{2k+1}\delta_{ki},
\end{equation}
where ${\delta_{ki}}$ is the Kroneker delta. With the chosen normalization (\ref{eq:legendre}), the values of the basis functions at the end points of the interval $[-1,1]$ are \cite{Abram65}
\begin{equation}\label{eq:legendre2}
P_k(1)=1, \qquad  P_k(-1)=(-1)^k.
\end{equation}
The numerical solution can be written in terms of this basis as
\begin{equation}\label{eq:sol}
U_j=\sum_{k=0}^p c_{jk}P_k,
\end{equation}
where $c_{jk}$ is a function of time $t$. Substituting (\ref{eq:sol}) into (\ref{eq:err3}), choosing
$V=P_k$, $k=0,1,\dots,p$,  and using (\ref{eq:legendre2}) and (\ref{eq:legendre}) results in the system of equations
\begin{equation}\label{eq:coef0}
\diff{}{t} c_{jk} +  \frac{(2k+1)a}{h_j}\int_{-1}^{1} \pdiff{U_j}{\xi} P_k \; d\xi=  -\frac{(2k+1)a}{h_j}(-1)^k [[U_j]],
\end{equation}
for $k=0,1,\dots,p$. This system of equations is enough to determine $U_j$ defined by \eqref{eq:sol}. However, since we are interested in the equation which $U_j$ itself satisfies we can reconstruct $U_j$ by multiplying each equation in the system by $P_k$ and summing over all $k$. We can thereby write the following exact expression for $\pdiff{}{t} U_j$, 
\begin{equation}\label{eq:coef1}
\pdiff{}{t} U_j + \frac{2a}{h_j}\sum_{k=0}^p \frac{2k+1}{2} \left(\int_{-1}^{1} \pdiff{U_j}{\xi} P_k \; d\xi \right)P_k =   - \frac{a}{h_j}[[U_{j}]]\left( \sum_{k=0}^p (-1)^k(2k+1)P_k\right).
\end{equation}
Since the Legendre polynomials are an orthogonal family it is clear that the first summed term in this expression is simply the projection of $\pdiff{U_j}{\xi}$ into the finite element space $V_h$. Moreover, since $\pdiff{U_j}{\xi}$ is already in the finite element space this projection is exact. Hence we can write, 
\begin{equation}\label{eq:coef2}
 \pdiff{}{t} U_j + \frac{2a}{h_j}\pdiff{}{\xi}U_j  = - \frac{a}{h_j}[[U_{j}]]\left( \sum_{k=0}^p (-1)^k(2k+1)P_k\right).
\end{equation}
To simplify this expression further let us use the following proposition:

\begin{prop}
\[
\sum_{k=0}^p (-1)^k(2k+1)P_k = (-1)^p\diff{}{\xi}R^-_{p+1}(\xi),
\]
where $R^-_{p+1}$ is the right Radau polynomial \cite{Abram65} of degree $p+1$,  which is defined as $R^-_{p+1} = P_{p+1}-P_p$. 
\end{prop}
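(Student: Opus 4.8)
The plan is to prove this by induction on $p$, using the classical derivative identity for Legendre polynomials
\[
(2k+1)P_k(\xi) = \frac{d}{d\xi}\bigl(P_{k+1}(\xi) - P_{k-1}(\xi)\bigr), \qquad k \geq 1,
\]
together with the base relation $P_0 = \frac{d}{d\xi}P_1$ (equivalently, one sets $P_{-1}\equiv 0$ so that the displayed identity holds for all $k\geq 0$). For the base case $p=0$, the left-hand side is $P_0 = 1$, while the right-hand side is $\frac{d}{d\xi}R^-_1 = \frac{d}{d\xi}(P_1 - P_0) = \frac{d}{d\xi}(\xi - 1) = 1$, so the two agree.

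For the inductive step, assume the identity holds for $p-1$ and subtract it from the desired statement for $p$. The left-hand side increases by exactly $(-1)^p(2p+1)P_p$. On the right-hand side the increment is
\[
(-1)^p\frac{d}{d\xi}R^-_{p+1} - (-1)^{p-1}\frac{d}{d\xi}R^-_p = (-1)^p\frac{d}{d\xi}\bigl(R^-_{p+1} + R^-_p\bigr) = (-1)^p\frac{d}{d\xi}\bigl(P_{p+1} - P_{p-1}\bigr),
\]
where the last equality uses $R^-_{p+1} + R^-_p = (P_{p+1} - P_p) + (P_p - P_{p-1})$. By the Legendre derivative identity with $k=p$ this equals $(-1)^p(2p+1)P_p$, matching the increment on the left, which closes the induction.

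Alternatively one can avoid induction entirely: rewrite each term on the left via $(2k+1)P_k = \frac{d}{d\xi}(P_{k+1} - P_{k-1})$, pull the derivative outside the finite sum, and observe that $\sum_{k=0}^{p}(-1)^k(P_{k+1}-P_{k-1})$ telescopes to $P_0 + (-1)^p(P_{p+1} - P_p) = 1 + (-1)^p R^-_{p+1}$; differentiating annihilates the constant and gives the claim. In either route there is no genuine analytic obstacle — the statement is a finite polynomial identity — and the only step requiring care is the bookkeeping of the low-index terms ($P_{-1}$, $P_0$) and of the alternating signs at the two ends of the telescoping sum.
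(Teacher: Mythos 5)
Your proof is correct, and it takes a slightly different route from the paper's. The paper starts from the right-hand side and invokes the descending expansion $\diff{}{\xi}P_{k+1} = (2k+1)P_k + (2k-3)P_{k-2} + (2k-7)P_{k-4} + \ldots$, applies it to both $P_{p+1}$ and $P_p$, and observes that the two expansions interleave with alternating signs to reproduce the sum. You instead start from the left-hand side and use the three-term derivative recurrence $(2k+1)P_k = \diff{}{\xi}\left(P_{k+1}-P_{k-1}\right)$ (with $P_{-1}\equiv 0$), which lets the whole sum telescope to $P_0 + (-1)^p R^-_{p+1}$ before differentiation, or equivalently closes a one-step induction. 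The two identities are of course equivalent (the paper's expansion is the telescoped form of yours), but your version is arguably the cleaner bookkeeping: the three-term relation is the more commonly quoted form, the telescoping is transparent, and the only delicate points --- the $P_{-1}$ and $P_0$ boundary terms and the signs at the two ends of the sum --- are exactly the ones you flag. Both arguments are elementary and complete; nothing is missing from yours.
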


\begin{proof}
It is known that the Legendre polynomials satisfy \cite{Abram65},
\[
\diff{}{\xi}P_{k+1} = (2k+1)P_k + (2k-3)P_{k-2} + (2k-7)P_{k-4} + \ldots 
\]
Therefore, a simple calculation shows 
\begin{align*}
(-1)^p\diff{}{\xi}R^-_{p+1}(\xi) &= (-1)^p\diff{}{\xi}[P_{p+1} - P_{p}] \\
&= (-1)^p[(2p+1)P_p + (2p-3)P_{p-2} + \ldots - (2p-1)P_{p-1} - (2p-5)P_{p-3} - \ldots] \\
&= \sum_{k=0}^p (-1)^k(2k+1)P_k,
\end{align*}
which completes the proof.
\end{proof}

Using this proposition, we rewrite \eqref{eq:coef2} compactly as 
\begin{equation}\label{eq:coef3}
\pdiff{}{t} U_j + \frac{2a}{h_j}\pdiff{}{\xi}U_j =  (-1)^{p+1}\frac{a}{h_j}[[U_{j}]]\diff{}{\xi}R^-_{p+1}(\xi).
\end{equation}
This is an equation which the polynomial approximation $U_j$ will satisfy exactly on the cell $I_j$. In particular, $U_j$ solves the same advection equation as the exact solution $u$, except with a forcing term. Note that when approximating smooth solutions the polynomial approximation $U_j$ will be locally order $p+1$ accurate to the exact solution and thus the jump term at the cell interface $[[U_{j}]]$ will be of order $h_j^{p+1}$ as well. This implies that the solution to \eqref{eq:coef3} and, hence, the numerical approximation will in a sense be close to a solution of the advection equation since the forcing term is small.

\section{Fourier Analysis}

To investigate the properties of solutions of \eqref{eq:coef3} we look at a single Fourier mode solution of the form $U_j(\xi,t) = \hat{U}_j(\xi, \omega)e^{-a \omega t}$, where $\hat{U}_j$ is a polynomial of degree $p$ in $\xi$. As is customary in Fourier analysis, we also assume periodic boundary conditions on $I$. Using these assumptions on the form of $U_j(\xi,t)$ in \eqref{eq:coef3} we have that this Fourier mode satisfies the ODE
\begin{equation} \label{eq:fouriermode_ode}
-a\omega \hat{U}_j +  \frac{2a}{h_j}\pdiff{}{\xi}\hat{U}_j =  (-1)^{p+1}\frac{a}{h_j}[[\hat{U}_{j}]]\diff{}{\xi}R^-_{p+1}(\xi).
\end{equation}
We can solve this ODE explicitly to obtain that
\begin{equation}\label{eq:general_solution}
\hat{U}_j(\xi,\omega) = \hat{U}_j(-1,\omega)e^{\frac{\omega h_j}{2}(\xi+1)} + \frac{(-1)^{p+1}}{2}[[\hat{U}_{j}]]\int_{-1}^\xi e^{\frac{\omega h_j}{2}(\xi-s)}\diff{}{s}R^-_{p+1}(s) \; ds.
\end{equation}
The general solution \eqref{eq:general_solution} is not necessarily a polynomial in $\xi$. Therefore, \eqref{eq:general_solution} is too general for our purposes. Below, we will look for additional restrictions which ensure that this solution is polynomial in $\xi$. We state two lemmas which will help us to rewrite and investigate the integral term in \eqref{eq:general_solution}.

\begin{lemma} The integral term in \eqref{eq:general_solution} satisfies the following relation 
\begin{equation}\label{eq:Lemma1}
\frac{(-1)^{p+1}}{2}\int_{-1}^\xi e^{\frac{\omega h_j}{2}(\xi-s)}\diff{}{s}R^-_{p+1}(s) \; ds = -e^{\frac{\omega h_j}{2}(\xi+1)} + \frac{1}{(\omega h_j)^{p+1}}\left(g(\omega h_j)e^{\frac{\omega h_j}{2}(\xi+1)} - f(\omega h_j, \xi)\right),
\end{equation}
where $g(\omega h_j)$ is a polynomial of degree $p$ in $\omega h_j$ and $f(\omega h_j,\xi)$ is a polynomial of degree $p$ in both $\omega h_j$ and $\xi$. 
\end{lemma}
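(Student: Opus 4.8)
**

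The plan is to establish the identity in \eqref{eq:Lemma1} by a direct computation using repeated integration by parts on the integral $\int_{-1}^\xi e^{\frac{\omega h_j}{2}(\xi-s)} (R^-_{p+1})'(s)\, ds$, exploiting the fact that $R^-_{p+1}$ is a polynomial of degree $p+1$ so that the process terminates after finitely many steps. First I would substitute $u = \frac{\omega h_j}{2}(\xi - s)$, or more simply just integrate by parts directly: each integration by parts trades a factor of $\diff{}{s}R^-_{p+1}$ for $\frac{2}{\omega h_j}$ times the next derivative, with boundary terms evaluated at $s=-1$ and $s=\xi$. Since $(R^-_{p+1})'$ has degree $p$, after $p+1$ steps the remaining integrand derivative vanishes and we are left with a finite sum of boundary terms.

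The key bookkeeping step is to organize this sum. The boundary terms at $s = \xi$ produce a polynomial in $\xi$ multiplied by negative powers of $\omega h_j$ down to $(\omega h_j)^{-(p+1)}$; collecting these and multiplying through by $(\omega h_j)^{p+1}$ gives the polynomial $f(\omega h_j, \xi)$ of degree $p$ in each variable (degree $p$ in $\xi$ because it involves $R^-_{p+1}$ and its derivatives evaluated at $\xi$, the top term being $(R^-_{p+1})'(\xi)$ times $(\omega h_j)^p \cdot \text{const}$, but the degree-$(p+1)$ part $P_{p+1}(\xi)$ only enters through derivatives; actually I should note $R^-_{p+1}$ itself does not appear undifferentiated, only $(R^-_{p+1})'$, $(R^-_{p+1})''$, etc., all of degree $\le p$). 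The boundary terms at $s=-1$ produce a factor $e^{\frac{\omega h_j}{2}(\xi+1)}$ times a polynomial in $\omega h_j$; part of this combines to give exactly $-e^{\frac{\omega h_j}{2}(\xi+1)}$ after accounting for the $\frac{(-1)^{p+1}}{2}$ prefactor and the values $(R^-_{p+1})'(-1), (R^-_{p+1})''(-1),\dots$, while the remainder, divided by $(\omega h_j)^{p+1}$, is $g(\omega h_j)$ of degree $p$. Verifying that the coefficient of $e^{\frac{\omega h_j}{2}(\xi+1)}$ separates cleanly into the stated $-1$ plus $(\omega h_j)^{-(p+1)} g(\omega h_j)$ form is the crucial consistency check; this amounts to checking that the lowest-order contribution in the $s=-1$ boundary terms matches, which follows from $R^-_{p+1}(-1) = P_{p+1}(-1) - P_p(-1) = (-1)^{p+1} - (-1)^p = 2(-1)^{p+1}$ and similar evaluations via \eqref{eq:legendre2}.

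The main obstacle I anticipate is not any single hard idea but rather the careful sign and index tracking across the $p+1$ integration-by-parts steps — in particular pinning down that the degree of $g$ is exactly $p$ (not $p+1$) and that the degree of $f$ in $\xi$ is exactly $p$. For the degree of $g$: the highest power of $\omega h_j$ arising in the $s=-1$ boundary terms before dividing is $(\omega h_j)^{p+1}$ (from iterating $p+1$ times), but the coefficient of that top power is precisely what produces the $-e^{\frac{\omega h_j}{2}(\xi+1)}$ term, so it is subtracted off, leaving degree $p$. An alternative, cleaner route that avoids some of this bookkeeping is to observe that $\hat U_j$ from \eqref{eq:general_solution} must itself be a polynomial of degree $p$ under the right conditions (as the paper announces it will pursue), and to define $f$ and $g$ implicitly by matching the polynomial and exponential parts; but since the lemma is stated as an unconditional identity about the integral, the direct integration-by-parts computation is the more honest approach, and I would present it that way, displaying the general term of the sum and then reading off the two pieces.
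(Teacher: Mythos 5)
Your overall strategy --- repeated integration by parts, differentiating $R^-_{p+1}$ at each step until the integral disappears after $p+1$ steps, then reading $f$ off the boundary terms at $s=\xi$ --- is exactly the paper's proof, and your identification of $f(\omega h_j,\xi)$ (degree $p$ in $\xi$ because only derivatives of $R^-_{p+1}$ appear) is correct. However, your account of where the $-e^{\frac{\omega h_j}{2}(\xi+1)}$ term comes from, and hence your degree count for $g$, is wrong. The boundary contributions at $s=-1$ are
\[
\frac{(-1)^{p+1}}{2}\sum_{k=1}^{p+1}\left(\frac{2}{\omega h_j}\right)^{k}\diff{^k}{\xi^k}R^-_{p+1}(-1)\, e^{\frac{\omega h_j}{2}(\xi+1)},
\]
which involves only the powers $(\omega h_j)^{-1},\dots,(\omega h_j)^{-(p+1)}$: there is no order-one piece, so no ``part of this'' can combine to give $-e^{\frac{\omega h_j}{2}(\xi+1)}$, and after clearing the denominator $(\omega h_j)^{p+1}$ the resulting polynomial $\tilde g$ has top power $(\omega h_j)^{p}$, not $(\omega h_j)^{p+1}$ --- there is no top coefficient to ``subtract off.'' The $-e^{\frac{\omega h_j}{2}(\xi+1)}$ in \eqref{eq:Lemma1} is produced artificially, by adding and subtracting $e^{\frac{\omega h_j}{2}(\xi+1)}=(\omega h_j)^{-(p+1)}(\omega h_j)^{p+1}e^{\frac{\omega h_j}{2}(\xi+1)}$ and setting $g(z)=z^{p+1}+\tilde g(z)$. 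Consequently $g$ has degree $p+1$, not $p$: the ``degree $p$'' in the statement is a typo in the paper (its own proof of Corollary 1 explicitly uses $g(z)=z^{p+1}+\tilde g(z)$ of degree $p+1$, as is required for $f(z,1)/g(z)$ to be a $\frac{p}{p+1}$ Pad\'e approximant), and the cancellation you propose to establish degree $p$ does not occur.

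A smaller point: your consistency check via $R^-_{p+1}(-1)=2(-1)^{p+1}$ is vacuous for this computation, since the undifferentiated value $R^-_{p+1}(-1)$ never appears when you integrate by parts in the direction that differentiates $R^-_{p+1}$; it enters only in the reverse-direction integration by parts used for Lemma 2, which is where an order-one boundary term genuinely produces $-e^{\frac{\omega h_j}{2}(\xi+1)}$. You appear to have transplanted that mechanism into Lemma 1, where it is not available.
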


\begin{proof}
We begin by integrating the integral in \eqref{eq:general_solution} by parts
\begin{multline*}
\frac{(-1)^{p+1}}{2}\int_{-1}^\xi e^{\frac{\omega h_j}{2}(\xi-s)}\diff{}{s}R^-_{p+1}(s)\;ds= \frac{(-1)^{p+1}}{2} \left(\frac{2}{\omega h_j}\right) \left[ \diff{}{\xi}R^-_{p+1}(-1)e^{\frac{\omega h_j}{2}(\xi+1)} -\diff{}{\xi}R^-_{p+1}(\xi)\right] \\ + \frac{(-1)^{p+1}}{2} \left(\frac{2}{\omega h_j}\right)\int_{-1}^\xi e^{\frac{\omega h_j}{2}(\xi-s)}\diff{^2}{s^2}R^-_{p+1}(s)\;ds,
\end{multline*}
and then continue integrating by parts until the remaining integral vanishes to obtain
\begin{equation}\label{eq:lemma1_1}
\frac{(-1)^{p+1}}{2}\int_{-1}^\xi e^{\frac{\omega h_j}{2}(\xi-s)}\diff{}{s}R^-_{p+1}(s)\;ds=  \frac{1}{(\omega h_j)^{p+1}} \left( \tilde{g}(\omega h_j)e^{\frac{\omega h_j}{2}(\xi+1)} -f(\omega h_j,\xi)\right),
\end{equation}
where
\begin{equation}\label{eq:g_polynomial}
\tilde{g}(\omega h_j) = \frac{(-1)^{p+1}}{2} \sum_{k=1}^{p+1} 2^k (\omega h_j)^{p+1-k}\diff{^k}{\xi^k}R^-_{p+1}(-1),
\end{equation}
\begin{equation}\label{eq:f_polynomial}
f(\omega h_j,\xi) = \frac{(-1)^{p+1}}{2} \sum_{k=1}^{p+1} 2^k (\omega h_j)^{p+1-k}\diff{^k}{\xi^k}R^-_{p+1}(\xi). 
\end{equation}
Note that $\tilde{g}(\omega h_j)$ is a polynomial of degree $p$ in $\omega h_j$, and $f(\omega h_j,\xi)$ is polynomial of degree $p$ in $\omega h_j$ and $\xi$. Therefore, defining $g(\omega h_j) = (\omega h_j)^{p+1} + \tilde{g}(\omega h_j)$ in \eqref{eq:lemma1_1} we will obtain equation \eqref{eq:Lemma1} which completes the proof.
\end{proof}

\begin{lemma} The integral term in \eqref{eq:general_solution} also satisfies 
\begin{multline} \label{eq:Lemma2}
\frac{(-1)^{p+1}}{2}\int_{-1}^\xi e^{\frac{\omega h_j}{2}(\xi-s)}\diff{}{s}R^-_{p+1}(s) \; ds = -e^{\frac{\omega h_j}{2}(\xi+1)} + \frac{(-1)^{p+1}}{2}R^-_{p+1}(\xi) \\ + \frac{(-1)^{p+1}}{2} \sum_{k=1}^\infty \left(\frac{\omega h_j}{2}\right)^k\frac{1}{(k-1)!}\int_{-1}^\xi (\xi-s)^{k-1} R^-_{p+1}(s)\;ds.
\end{multline}
\end{lemma}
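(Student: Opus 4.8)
The plan is to replace the exponential kernel by its Taylor series in powers of $(\xi-s)$, namely $e^{\frac{\omega h_j}{2}(\xi-s)} = \sum_{k=0}^\infty \left(\frac{\omega h_j}{2}\right)^k \frac{(\xi-s)^k}{k!}$, and integrate term by term. Since this series converges uniformly for $s$ in the compact interval $[-1,\xi]$, interchanging summation and integration is legitimate, and it yields
\[
\frac{(-1)^{p+1}}{2}\int_{-1}^\xi e^{\frac{\omega h_j}{2}(\xi-s)}\diff{}{s}R^-_{p+1}(s)\;ds = \frac{(-1)^{p+1}}{2}\sum_{k=0}^\infty \left(\frac{\omega h_j}{2}\right)^k \frac{1}{k!}\int_{-1}^\xi (\xi-s)^k \diff{}{s}R^-_{p+1}(s)\;ds.
\]

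Next I would treat the $k=0$ term separately: it integrates to $\frac{(-1)^{p+1}}{2}\bigl(R^-_{p+1}(\xi) - R^-_{p+1}(-1)\bigr)$, and since $R^-_{p+1}(-1) = P_{p+1}(-1) - P_p(-1) = 2(-1)^{p+1}$ by \eqref{eq:legendre2}, this equals $\frac{(-1)^{p+1}}{2}R^-_{p+1}(\xi) - 1$. For each $k\geq 1$ I would integrate by parts once in $s$; the boundary contribution at $s=\xi$ vanishes because $(\xi-s)^k$ does, the contribution at $s=-1$ produces $-(\xi+1)^k R^-_{p+1}(-1) = -2(-1)^{p+1}(\xi+1)^k$, and the remaining integral is $k\int_{-1}^\xi (\xi-s)^{k-1} R^-_{p+1}(s)\;ds$. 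Multiplying by the prefactor $\frac{(-1)^{p+1}}{2}\left(\frac{\omega h_j}{2}\right)^k\frac{1}{k!}$ turns the $k\geq 1$ portion of the sum into
\[
-\sum_{k=1}^\infty \left(\frac{\omega h_j}{2}\right)^k \frac{(\xi+1)^k}{k!} + \frac{(-1)^{p+1}}{2}\sum_{k=1}^\infty \left(\frac{\omega h_j}{2}\right)^k \frac{1}{(k-1)!}\int_{-1}^\xi (\xi-s)^{k-1} R^-_{p+1}(s)\;ds.
\]

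Finally I would reassemble. The first sum above is $-\bigl(e^{\frac{\omega h_j}{2}(\xi+1)} - 1\bigr)$, so adding it to the constant $-1$ left over from the $k=0$ term the two $\pm 1$'s cancel, leaving exactly $-e^{\frac{\omega h_j}{2}(\xi+1)}$; the surviving $\frac{(-1)^{p+1}}{2}R^-_{p+1}(\xi)$ together with the second sum above are precisely the remaining two terms on the right-hand side of \eqref{eq:Lemma2}. The only nonroutine point is the justification of the termwise integration, which is immediate here since the kernel is entire in $s$ and the interval is compact. (As an alternative that bypasses series manipulations entirely, one can check that both sides of \eqref{eq:Lemma2} vanish at $\xi=-1$ --- again using $R^-_{p+1}(-1) = 2(-1)^{p+1}$ for the right-hand side --- and that both satisfy the linear first-order ODE $\Phi'(\xi) = \frac{(-1)^{p+1}}{2}\diff{}{\xi}R^-_{p+1}(\xi) + \frac{\omega h_j}{2}\Phi(\xi)$, so they coincide by uniqueness.)
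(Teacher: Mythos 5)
Your proof is correct. It takes a genuinely different, though closely related, route from the paper's. The paper proceeds by repeated integration by parts "in reverse order": at each step one antiderivative of $R^-_{p+1}$ is absorbed into the boundary term, producing the series $\frac{(-1)^{p+1}}{2}\sum_{k\ge 1}\left(\frac{\omega h_j}{2}\right)^k R^{-,(-k)}_{p+1}(\xi)$ in terms of the iterated antiderivatives $R^{-,(-(k+1))}_{p+1}(\xi)=\int_{-1}^\xi R^{-,(-k)}_{p+1}(s)\,ds$, and then Cauchy's formula for repeated integration is invoked to rewrite each $R^{-,(-k)}_{p+1}(\xi)$ as $\frac{1}{(k-1)!}\int_{-1}^\xi(\xi-s)^{k-1}R^-_{p+1}(s)\,ds$. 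You instead Taylor-expand the kernel first and perform a single integration by parts per term, which lands directly on the single-integral form and avoids the separate appeal to the Cauchy formula; the price is having to justify the interchange of sum and integral, which you do (uniform convergence on a compact interval). Both arguments hinge on the same two ingredients: the value $R^-_{p+1}(-1)=2(-1)^{p+1}$, which after multiplication by the prefactor produces the $-e^{\frac{\omega h_j}{2}(\xi+1)}$ term (note the paper's proof writes this value as $2(-1)^p$, which is a sign typo --- your value is the one consistent with \eqref{eq:legendre2} and with the sign of the exponential in \eqref{eq:Lemma2}), and the vanishing of one boundary term in each integration by parts (at $s=\xi$ in your case because $(\xi-s)^k$ vanishes there; at $s=-1$ in the paper's because $R^{-,(-k)}_{p+1}(-1)=0$ by construction). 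Your parenthetical alternative --- checking that both sides vanish at $\xi=-1$ and satisfy the same linear first-order ODE --- is also valid and is arguably the most economical verification of the identity.
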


\begin{proof}
We prove this lemma in a similar manner to Lemma 1, i.e., by integrating the integral in \eqref{eq:general_solution} by parts, this time in reverse order
\begin{multline*}
\frac{(-1)^{p+1}}{2}\int_{-1}^\xi e^{\frac{\omega h_j}{2}(\xi-s)}\diff{}{s}R^-_{p+1}(s)\;ds= -e^{\frac{\omega h_j}{2}(\xi+1)} + \frac{(-1)^{p+1}}{2} R^-_{p+1}(\xi)  \\ + \frac{(-1)^{p+1}}{2} \frac{\omega h_j}{2} \int_{-1}^\xi e^{\frac{\omega h_j}{2}(\xi-s)} R^-_{p+1}(s)\;ds.
\end{multline*}
Here, from the definition of the Radau polynomial $R_{p+1}^-$, we have used $R_{p+1}^-(-1) = 2(-1)^p $. We continue integrating by parts to obtain
\begin{multline}
\frac{(-1)^{p+1}}{2}\int_{-1}^\xi e^{\frac{\omega h_j}{2}(\xi-s)}\diff{}{s}R^-_{p+1}(s)\;ds= -e^{\frac{\omega h_j}{2}(\xi+1)} + \frac{(-1)^{p+1}}{2} R^-_{p+1}(\xi) +  \frac{(-1)^{p+1}}{2} \left(\frac{\omega h_j}{2}\right) R^{-,(-1)}_{p+1}(\xi)  \\ +  \frac{(-1)^{p+1}}{2} \left(\frac{\omega h_j}{2}\right)^2 R^{-,(-2)}_{p+1}(\xi) + \ldots, \label{eq:lemma2}
\end{multline}
where we define $R^{-,(-k)}_{p+1}$ to be the repeated integrals of the right Radau polynomial, i.e., $R^{-,(0)}_{p+1}(\xi) = R^{-}_{p+1}(\xi)$ and
\begin{equation}\label{eq:Radau_integrals}
R^{-,(-(k+1))}_{p+1}(\xi) = \int_{-1}^\xi  R^{-,(-k)}_{p+1}(s)\; ds.
\end{equation}
Finally, using this definition with the Cauchy integration formula we can write the polynomials $R^{-,(-k)}_{p+1}$ as
\begin{equation}\label{eq:RadauIntegrals}
R^{-,(-k)}_{p+1}(\xi) = \frac{1}{(k-1)!}\int_{-1}^\xi (\xi-s)^{k-1} R^{-}_{p+1}(s)\; ds,
\end{equation}
which, when used in \eqref{eq:lemma2}, yields \eqref{eq:Lemma2} and completes the proof. 
\end{proof}

\noindent From these two lemmas we can establish a useful result regarding the polynomials $g$ and $f$. 

\begin{cor}
\begin{equation}\label{eq:Corollary1}
\frac{f(\omega h_j,\xi)}{g(\omega h_j)} = e^{\frac{\omega h_j}{2}(\xi+1)} - \frac{(-1)^{p+1} (\omega h_j)^{p+1}}{2g(\omega h_j)} \left[ R^-_{p+1}(\xi) +   \sum_{k=1}^\infty \left(\frac{\omega h_j}{2}\right)^k R^{-,(-k)}_{p+1}(\xi)  \right]
\end{equation}
and, in particular,
\begin{equation}\label{eq:Corollary2}
\frac{f(\omega h_j,1)}{g(\omega h_j)} = e^{\omega h_j} + \mathcal{O}((\omega h_j)^{2p+2}),
\end{equation}
i.e. $\frac{f(z,1)}{g(z)}$ is the $\frac{p}{p+1}$ Pad\'e approximant of $e^z$. 
\end{cor}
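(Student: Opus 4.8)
The plan is to derive \eqref{eq:Corollary1} simply by equating the two expressions for the integral term provided by Lemma 1 and Lemma 2. Both representations contain the term $-e^{\frac{\omega h_j}{2}(\xi+1)}$, which cancels, leaving
\[
\frac{1}{(\omega h_j)^{p+1}}\left( g(\omega h_j)e^{\frac{\omega h_j}{2}(\xi+1)} - f(\omega h_j,\xi)\right) = \frac{(-1)^{p+1}}{2}\left[ R^-_{p+1}(\xi) + \sum_{k=1}^\infty \left(\frac{\omega h_j}{2}\right)^k R^{-,(-k)}_{p+1}(\xi)\right].
\]
Multiplying through by $(\omega h_j)^{p+1}$, dividing by $g(\omega h_j)$ (legitimate since, as checked below, $g(0)\neq 0$), and solving for $f/g$ gives exactly \eqref{eq:Corollary1}.

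For \eqref{eq:Corollary2} I would set $\xi = 1$ in \eqref{eq:Corollary1} and show the bracketed term is $\mathcal{O}((\omega h_j)^{p+1})$. First, $R^-_{p+1}(1) = P_{p+1}(1) - P_p(1) = 0$ by \eqref{eq:legendre2}. Next, since $R^-_{p+1} = P_{p+1} - P_p$ and both $P_{p+1}$ and $P_p$ are orthogonal on $[-1,1]$ to every polynomial of degree at most $p-1$ by \eqref{eq:legendre}, the Radau polynomial $R^-_{p+1}$ is orthogonal to $\mathcal{P}_{p-1}$. Using the Cauchy representation \eqref{eq:RadauIntegrals},
\[
R^{-,(-k)}_{p+1}(1) = \frac{1}{(k-1)!}\int_{-1}^1 (1-s)^{k-1} R^-_{p+1}(s)\,ds,
\]
where the polynomial factor $(1-s)^{k-1}$ has degree $k-1$; hence this integral vanishes whenever $k-1 \le p-1$, i.e. for all $1 \le k \le p$. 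Therefore the bracket in \eqref{eq:Corollary1} evaluated at $\xi=1$ reduces to $\sum_{k=p+1}^\infty (\omega h_j/2)^k R^{-,(-k)}_{p+1}(1) = \mathcal{O}((\omega h_j)^{p+1})$.

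It remains to control the prefactor $(\omega h_j)^{p+1}/g(\omega h_j)$. From \eqref{eq:g_polynomial} and $g = (\omega h_j)^{p+1} + \tilde g$, the constant term $g(0)$ is the $k=p+1$ term of $\tilde g$, namely $\frac{(-1)^{p+1}}{2}2^{p+1}\frac{d^{p+1}}{d\xi^{p+1}}R^-_{p+1}(-1)$; since $R^-_{p+1}$ has degree exactly $p+1$ (its leading term comes from $P_{p+1}$), its $(p+1)$-st derivative is a nonzero constant, so $g(0)\neq 0$ and $(\omega h_j)^{p+1}/g(\omega h_j) = \mathcal{O}((\omega h_j)^{p+1})$. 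Multiplying the two $\mathcal{O}((\omega h_j)^{p+1})$ factors yields $f(\omega h_j,1)/g(\omega h_j) = e^{\omega h_j} + \mathcal{O}((\omega h_j)^{2p+2})$, which is \eqref{eq:Corollary2}. Finally, $f(\cdot,1)$ has degree $p$ and $g$ has degree $p+1$ with $g(0)\neq 0$, so $f(z,1)/g(z)$ is a rational function with numerator degree $\le p$, denominator degree $\le p+1$, agreeing with $e^z$ through order $z^{2p+1}$; by uniqueness of Padé approximants it is the $\frac{p}{p+1}$ Padé approximant of $e^z$. The one genuinely substantive step is the vanishing $R^{-,(-k)}_{p+1}(1)=0$ for $k\le p$: this is where the orthogonality of the right Radau polynomial to all lower-degree polynomials enters and doubles the approximation order from $p+1$ to $2p+2$; everything else is bookkeeping with the two lemmas and the check that $g(0)\neq 0$.
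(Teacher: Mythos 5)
Your proof is correct and follows essentially the same route as the paper's: equate the two lemma representations, solve for $f/g$, evaluate at $\xi=1$, and use $R^-_{p+1}(1)=0$ together with the orthogonality of $R^-_{p+1}$ to polynomials of degree $\le p-1$ to kill the terms $R^{-,(-k)}_{p+1}(1)$ for $k=1,\ldots,p$. Your explicit check that $g(0)\neq 0$ (via the nonvanishing $(p+1)$-st derivative of $R^-_{p+1}$) is a detail the paper leaves implicit, but it is indeed needed to justify the $\mathcal{O}((\omega h_j)^{2p+2})$ bound and the Pad\'e normalization, so it is a welcome addition rather than a deviation.
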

\noindent Before we state the proof of this corollary let us briefly recall the definition of a Pad\'e approximant \cite{Pade}. 
\begin{defi}
Given integers $m$ and $n$ and a sufficiently smooth function $F(z)$, the $\frac{m}{n}$ Pad\'e approximant of $F(z)$ is a rational function $\frac{P(z)}{Q(z)}$ where $P(z)$ and $Q(z)$ are polynomials of degree $m$ and $n$, respectively, and satisfy 
\[
\frac{P(z)}{Q(z)} = F(z) + \mathcal{O}(z^{m+n+1}).
\]  
This Pad\'e approximant is unique up to a constant multiple of the numerator and denominator. It is conventional to take $Q(0) = 1$ so that the Pad\'e approximant is uniquely defined. 
\end{defi}
\noindent We now proceed to prove the Corollary. 
\begin{proof}[Proof of Corollary 1]
Equating the right hand sides of \eqref{eq:Lemma1} and \eqref{eq:Lemma2} we obtain
\begin{multline*}
-e^{\frac{\omega h_j}{2}(\xi+1)} + \frac{1}{(\omega h_j)^{p+1}}\left(g(\omega h_j)e^{\frac{\omega h_j}{2}(\xi+1)} - f(\omega h_j, \xi)\right) = -e^{\frac{\omega h_j}{2}(\xi+1)} \\ + \frac{(-1)^{p+1}}{2} \left[ R^-_{p+1}(\xi) + \sum_{k=1}^\infty \left(\frac{\omega h_j}{2}\right)^k R^{-,(-k)}_{p+1}(\xi) \right].
\end{multline*}
Solving this expression for $\frac{f(\omega h_j,\xi)}{g(\omega h_j)}$ immediately yields \eqref{eq:Corollary1}. Subsequently, evaluating \eqref{eq:Corollary1} at $\xi =1$ we obtain
\[
\frac{f(\omega h_j,1)}{g(\omega h_j)} = e^{\omega h_j} - \frac{(-1)^{p+1} (\omega h_j)^{p+1}}{2g(\omega h_j)} \left[ R^-_{p+1}(1) +   \sum_{k=1}^\infty \left(\frac{\omega h_j}{2}\right)^k R^{-,(-k)}_{p+1}(1)  \right].
\]
From the definition of $R^{-}_{p+1}$ in terms of the Legendre polynomials we have that $R^{-}_{p+1} (1) = 0$. Furthermore, from the definition of $R^{-,(-k)}_{p+1}(\xi)$ in \eqref{eq:RadauIntegrals} and the orthogonality of $R^{-}_{p+1}$ to all polynomials of degree $p-1$ we have that $R^{-,(-k)}_{p+1}(1) = 0$ for $k=1,\ldots, p$. We therefore find that 
\[
\frac{f(\omega h_j,1)}{g(\omega h_j)} = e^{\omega h_j} + \frac{(-1)^{p+1} (\omega h_j)^{p+1}}{2g(\omega h_j)} \left[\sum_{k=p+1}^\infty \left(\frac{\omega h_j}{2}\right)^k R^{-,(-k)}_{p+1}(1)  \right],
\]
which yields
\[
\frac{f(\omega h_j,1)}{g(\omega h_j)} =  e^{\omega h_j} + \mathcal{O}((\omega h_j)^{2p+2}).
\]
By Lemma 1, $f(z,1)$ is a polynomial of degree $p$ while $g(z)$ is a polynomial of degree $p+1$ with the form $g(z) = z^{p+1} + \tilde{g}(z)$. Therefore, we find after a possible rescaling that the rational function $\frac{f(z,1)}{g(z)}$ approximates $e^{z}$ to order $2p+2$. Therefore it is the unique $\frac{p}{p+1}$ Pad\'e approximant of $e^{z}$.  
\end{proof}

\noindent Using Lemmas 1 and 2 we can write the general solution \eqref{eq:general_solution} in two ways:  
\begin{align}
\hat{U}_j(\xi,\omega) =&\; \hat{U}_j(-1,\omega)e^{\frac{\omega h_j}{2}(\xi+1)} - [[\hat{U}_{j}]]e^{\frac{\omega h_j}{2}(\xi+1)} + \frac{[[\hat{U}_{j}]]}{(\omega h_j)^{p+1}}\left( g(\omega h_j)e^{\frac{\omega h_j}{2}(\xi+1)} - f(\omega h_j, \xi) \right) \nonumber \\ 
=&\; \hat{U}_{j-1}(1,\omega)e^{\frac{\omega h_j}{2}(\xi+1)} + \frac{[[\hat{U}_{j}]]}{(\omega h_j)^{p+1}}\left( g(\omega h_j)e^{\frac{\omega h_j}{2}(\xi+1)} - f(\omega h_j, \xi) \right), \label{eq:fourier_node}
\end{align}
and
\begin{align}
\hat{U}_j(\xi,\omega) =&\; \hat{U}_j(-1,\omega)e^{\frac{\omega h_j}{2}(\xi+1)} - [[\hat{U}_{j}]]e^{\frac{\omega h_j}{2}(\xi+1)} + \frac{(-1)^{p+1}}{2}[[\hat{U}_{j}]]\left[ R^-_{p+1}(\xi) + \sum_{k=1}^\infty \left(\frac{\omega h_j}{2}\right)^k R^{-,(-k)}_{p+1}(\xi) \right]  \nonumber\\
 =&\; \hat{U}_{j-1}(1,\omega)e^{\frac{\omega h_j}{2}(\xi+1)} + \frac{(-1)^{p+1}}{2}[[\hat{U}_{j}]]\left[ R^-_{p+1}(\xi) + \sum_{k=1}^\infty \left(\frac{\omega h_j}{2}\right)^k R^{-,(-k)}_{p+1}(\xi) \right]. \label{eq:fourier_node2}
\end{align}
Note that the solution corresponding to the exact advection of the downwind point $\hat{U}_{j-1}(1,\omega)$ is $\hat{U}_{j-1}(1,\omega)e^{\frac{\omega h_j}{2}(\xi+1)}$ and, hence, from \eqref{eq:fourier_node} and \eqref{eq:fourier_node2} the general solution for the numerical approximation in cell $I_j$ consists of two parts: exact advection of the downwind value in cell $I_{j-1}$ and higher order error terms which are proportional to the magnitude of the jump at that interface. 

As mentioned above, we are interested only in polynomial solutions of \eqref{eq:fouriermode_ode}. The reason for this is that we know the numerical solution is polynomial in $\xi$ for all times $t$. Hence, the numerical solution should be composed solely of solutions of \eqref{eq:fouriermode_ode} which are polynomials in $\xi$. By examining \eqref{eq:fourier_node} we see that the solutions $U_j$ will be polynomial in $\xi$ only when
\begin{equation}\label{eq:poly_condition}
\hat{U}_{j-1}(1,\omega) + [[\hat{U}_j]]\frac{g(\omega h_j)}{(\omega h_j)^{p+1}} = 0
\end{equation}
is satisfied. Hence, assuming $g(\omega h_j) \neq 0$, we obtain after rearranging that $\hat{U}_j(-1,\omega)$ is related to $\hat{U}_{j-1}(1,\omega)$ by
\[
\hat{U}_j(-1,\omega) = \hat{U}_{j-1}(1,\omega)\frac{g(\omega h_j) - (\omega h_j)^{p+1}}{g(\omega h_j)}.
\]
Using the above relation in \eqref{eq:fourier_node} we obtain after rearranging that the polynomial solutions of \eqref{eq:fouriermode_ode} have the form
\begin{equation}\label{eq:particular_solution}
\hat{U}_j(\xi,\omega) = \hat{U}_{j-1}(1,\omega)\frac{f(\omega h_j, \xi)}{g(\omega h_j)}. 
\end{equation}
Thus, we obtain that the polynomial solutions on each cell are completely determined by the rational function $\frac{f(\omega h_j,\xi)}{g(\omega h_j)}$ and the value of the numerical solution at the downwind point of the previous cell. Using \eqref{eq:Corollary2} from Corollary 1 we have that at $\xi =1$ the polynomial solutions \eqref{eq:particular_solution} satisfy 
\begin{align}
\hat{U}_j(1,\omega) &= \hat{U}_{j-1}(1,\omega)\frac{f(\omega h_j, 1)}{g(\omega h_j)}\label{eq:particular_solution_downwind}  \\
 &= \hat{U}_{j-1}(1,\omega)e^{\omega h_j} + \mathcal{O}((\omega h_j)^{2p+2}).\nonumber 
\end{align}
\eqref{eq:particular_solution_downwind} means that the value of the solution at the right downwind point of cell $I_{j-1}$ is advected to the downwind point in the cell $I_j$ with local accuracy $2p+2$. This is related to the observed strong order $2p+1$ superconvergence at the downwind point of the numerical solution \cite{adjerid01a}. In fact, this is also related to the study of the superaccurate errors in disipation and dispersion of the DG scheme. The same Pad\'e approximant was studied by Hu and Atkins in \cite{HuAtkins02}, Ainsworth in \cite{Ainsworth04}, and Krivodonova and Qin in \cite{KQ13}. In each paper the authors note that the superaccuracies in dissipation and dispersion errors stem from the accuracy of this Pad\'e approximant. A key difference here, however, is that we have not made the assumption of a uniform mesh. Hence we immediately obtain from \eqref{eq:particular_solution_downwind} that we can extend the previously known results concerning the local $2p+2$ order of accuracy in dissipation and $2p+3$ order of accuracy in dispersion of the DG method to non-uniform meshes. 

In \cite{HuAtkins02,Ainsworth04} the Pad\'e approximant in \eqref{eq:particular_solution_downwind} was used to relate the numerical frequencies and wavenumbers of the scheme by assuming that $\omega$ was an exact frequency and finding the numerical wave number $\tilde{k}_n$. Another approach was taken in \cite{KQ13} where the authors were interested in the spectrum of the DG method, i.e. the precise values of the numerical frequencies $\omega$ with a fixed wavenumber $k$. In this work we will use the latter approach of estimating the numerical frequencies $\omega$ for problems with periodic boundary conditions. To this end we use the first line of \eqref{eq:particular_solution_downwind} together with periodicity of the boundary conditions to obtain the following condition on $\omega$ 
\begin{equation}\label{eq:freq_condition}
\prod_{j=1}^N \frac{f(\omega h_j, 1)}{g(\omega h_j)} = 1 .
\end{equation}
Hence, the numerical frequency $\omega$ must satisfy this relation. Solving \eqref{eq:freq_condition} for every value of $\omega$, however, is difficult since it would require finding the roots of a high order polynomial. An attempt to describe these values for some particular meshes was made in \cite{KQ13_b}, but this is beyond the scope of this paper. We will instead make the simplifying assumption that the mesh is uniform and obtain that the values of $\omega$ are solutions of 
\begin{equation}\label{eq:freq_condition_uniform}
\frac{f(\omega h, 1)}{g(\omega h)} = e^{\kappa_n h},
\end{equation}
where $e^{k_n h}$ is an $N$-th root of unity, i.e. $\kappa_n = \frac{2\pi n i}{L}$, $n = 0,\ldots, N-1$, where $L$ is the length of the domain $I$. Note that since the mesh is uniform and each downwind point of this solution is related by $\hat{U}_j(1,\omega) = \frac{f(\omega h, 1)}{g(\omega h)}\hat{U}_{j-1}(1,\omega) = e^{\kappa_n h}\hat{U}_{j-1}(1,\omega)$ the exact physical frequency for this wave is $\omega = \kappa_n$. In the following theorem we give an estimate on the values of $\omega$ which are solutions of \eqref{eq:freq_condition_uniform}.

\begin{theorem}
For a uniform mesh of $N$ elements, there are $(p+1)N$ solutions of \eqref{eq:fouriermode_ode} of the form $U_j(\xi,t) = \hat{U}_j(\xi,\omega)e^{-a\omega t} $ which are polynomials in $\xi$. These solutions satisfy $\hat{U}_{j}(1,\omega) = e^{\kappa_n h} \hat{U}_{j-1}(1,\omega)$ for each $j$ where $\kappa_n = \frac{2\pi n i}{L}$, $n = 0,\ldots, N-1$ . For each $\kappa_n$ there are $p+1$ values $\omega = \omega_{0}, \omega_{1}, \ldots, \omega_{p}$ which have the expansions
\[
\omega_{0} = \kappa_n + \mathcal{O}(\kappa_n^{2p+2}h^{2p+1})
\]
and 
\[
\omega_{m} = \frac{\mu_m}{h} + \mathcal{O}(\kappa_n), \qquad m = 1,\ldots, p,
\]
where $\mu_m$ are the $p$ non-zero roots of the polynomial $g(z) - f(z)$ and satisfy $\mathrm{Re}(\mu_m) > 0$. 
\end{theorem}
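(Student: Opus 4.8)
The plan is to reduce the theorem to counting and locating the roots of an explicit one-parameter family of polynomials, and then to analyse those roots by classical perturbation arguments (Rouch\'e / the implicit function theorem), with a single input at the very end from the $L^2$ stability of the scheme. Throughout write $f(z)$ for $f(z,1)$ and $R(z)=f(z)/g(z)$, which by Corollary~1 is the $\tfrac{p}{p+1}$ Pad\'e approximant of $e^z$. For the count and the downwind recursion I argue as follows. By \eqref{eq:particular_solution} and \eqref{eq:freq_condition_uniform}, on a uniform mesh a Fourier mode $U_j=\hat U_j(\xi,\omega)e^{-a\omega t}$ is polynomial in $\xi$ precisely when $f(\omega h)=e^{\kappa_n h}g(\omega h)$ for some $n\in\{0,\dots,N-1\}$. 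For fixed $n$ this is a polynomial equation in $z=\omega h$ with leading coefficient $-e^{\kappa_n h}\neq0$ and degree $\deg g=p+1$, so it has exactly $p+1$ roots counted with multiplicity; summing over $n$ gives $(p+1)N$. None of these roots is a zero of $g$: the Pad\'e table of $e^z$ is normal, so $f$ and $g$ share no zero, and if $g(z_0)=0$ then $f(z_0)-e^{\kappa_n h}g(z_0)=f(z_0)\neq0$. Hence every root determines a genuine polynomial solution through \eqref{eq:particular_solution}, and that solution satisfies $\hat U_j(1,\omega)=\tfrac{f(\omega h)}{g(\omega h)}\hat U_{j-1}(1,\omega)=e^{\kappa_n h}\hat U_{j-1}(1,\omega)$ by construction.

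For the physical branch, fix a frequency $\kappa_n$ and refine the mesh, so $\zeta:=\kappa_n h\to0$. Since $g(0)\neq0$, $R$ is analytic near $0$ and, by \eqref{eq:Corollary2}, $R(z)=e^z+E(z)$ with $E$ analytic and $E(z)=\mathcal O(z^{2p+2})$; in particular $R(0)=1$ and $R'(0)=1$. The equation to solve is $R(z)=e^{\zeta}$. The entire function $e^z-e^{\zeta}$ has a simple zero at $z=\zeta$ and no other zero in a fixed small disc about the origin, so a Rouch\'e argument on the circle $|z-\zeta|=c\,|\zeta|^{2p+2}$ (equivalently, one Newton step from $z=\zeta$) shows that for $|\zeta|$ small $R(z)-e^{\zeta}$ has a unique zero $z_0$ there, with $|z_0-\zeta|=\mathcal O(|\zeta|^{2p+2})$. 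Therefore $\omega_0=z_0/h=\kappa_n+\mathcal O(\kappa_n^{2p+2}h^{2p+1})$.

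For the non-physical branches, the polynomial $F_\zeta(z)=f(z)-e^{\zeta}g(z)$ depends analytically on the parameter $e^{\zeta}$; at $\zeta=0$ it equals $f(z)-g(z)=-g(z)\bigl(R(z)-1\bigr)$, which has a simple zero at $0$ (because $R(z)-1=z+\mathcal O(z^2)$ and $g(0)\neq0$) and, being monic of degree $p+1$, factors as $g(z)-f(z)=z\prod_{m=1}^{p}(z-\mu_m)$ with every $\mu_m\neq0$. By continuity of the roots of a polynomial under perturbation of its coefficients, and since $e^{\zeta}-1=\mathcal O(\zeta)$, the $p$ roots of $F_\zeta$ bounded away from $0$ are $z_m=\mu_m+\mathcal O(\zeta)$ when $\mu_m$ is simple (and $\mu_m+\mathcal O(\zeta^{1/r})$ if it has multiplicity $r$), so $\omega_m=z_m/h=\mu_m/h+\mathcal O(\kappa_n)$. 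For the sign of $\mathrm{Re}(\mu_m)$ I would use the upwind $L^2$ stability of DG: testing \eqref{eq:err3} with $V=U_j$ and summing over $j$ gives $\tfrac{d}{dt}\|U\|_{L^2(I)}^2\le 0$, with equality iff every interface jump $[[U_j]]$ vanishes. Applied to the real and imaginary parts of a polynomial mode $U_j=\hat U_j(\xi,\omega)e^{-a\omega t}$, whose $L^2$ norm equals $e^{-2a\,\mathrm{Re}(\omega)t}\|\hat U(\cdot,\omega)\|_{L^2(I)}$, this forces $\mathrm{Re}(\omega)\ge0$, and $\mathrm{Re}(\omega)=0$ forces $[[\hat U_j]]=0$ for all $j$; then \eqref{eq:fouriermode_ode} with vanishing right-hand side gives $\hat U_j=\hat U_j(-1,\omega)e^{\frac{\omega h}{2}(\xi+1)}$, a polynomial in $\xi$ only if $\omega=0$. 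Hence every polynomial mode with $\omega\neq0$ has $\mathrm{Re}(\omega)>0$; taking $n=0$, where $\kappa_0=0$ and $\omega_m=\mu_m/h$ exactly with $\mu_m\neq0$, yields $\mathrm{Re}(\mu_m)>0$. (Alternatively one may cite the A-/L-stability and strict dissipativity of the subdiagonal Pad\'e approximant of $e^z$, \cite{LeSR74,Ainsworth04}.)

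The genuinely delicate steps are the bookkeeping in the Rouch\'e estimate for $\omega_0$ — one must check that the radius $c|\zeta|^{2p+2}$ is small enough that no pole of $R$ lies inside (so $R=e^z+E$ is valid on the circle) and that $|E|<|e^z-e^{\zeta}|$ there, which is exactly where the exponent $2p+1$ is produced — and the degenerate case in which $g-f$ has a multiple nonzero root, where the remainder in $\omega_m$ degrades to a fractional power of $\kappa_n$ though the leading term $\mu_m/h$ and the conclusion $\mathrm{Re}(\mu_m)>0$ are unaffected. The remaining steps are routine.
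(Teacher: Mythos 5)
Your argument is correct, and its skeleton coincides with the paper's: both reduce the theorem to locating, for each $n$, the $p+1$ roots in $z=\omega h$ of the characteristic polynomial $g(z)e^{\kappa_n h}-f(z,1)=0$, and both identify one root near $\kappa_n h$ and $p$ roots near the nonzero zeros $\mu_m$ of $g(z)-f(z,1)$. The execution differs in three places. First, where the paper extracts the physical root by observing that $f/g=e^{z}+\mathcal{O}(z^{2p+2})$ and asserting that there ``should'' be a root $\kappa_n+\mathcal{O}(\kappa_n^{2p+2}h^{2p+1})$, you localize it by Rouch\'e on the circle $|z-\zeta|=c|\zeta|^{2p+2}$; this is where the exponent $2p+1$ is actually earned, and your version is the more rigorous one. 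Second, where the paper finds the non-physical roots via the formal ansatz $\omega=d_{-1}/h+d_0+d_1h+\ldots$ (which tacitly assumes the $\mu_m$ are simple), you invoke continuity of polynomial roots under coefficient perturbation and explicitly flag the degradation to fractional powers at a multiple root. Third --- the only genuinely different mathematical ingredient --- the paper obtains $\mathrm{Re}(\mu_m)>0$ by citing \cite{KQ13}, whereas you derive it from the cell-entropy identity $\frac{d}{dt}\|U\|_{L^2(I)}^2=-a\sum_j[[U_j]]^2$ applied to the $n=0$ modes, for which $\omega h=\mu_m$ exactly; this makes the theorem self-contained and is arguably the cleaner route. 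You also verify, via normality of the Pad\'e table of $e^z$, that no root of the characteristic polynomial is a zero of $g$, so that each root genuinely yields a polynomial mode through \eqref{eq:particular_solution} --- a point the paper passes over silently. (One harmless sign slip: $f-g=g(R-1)$, not $-g(R-1)$; only the simplicity of the zero at the origin is used, so nothing is affected.)
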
 

\begin{proof}
We begin by noting that from \eqref{eq:Corollary2} 
\[
\frac{f(\omega h,1)}{g(\omega h)} = e^{\omega h} + \mathcal{O}((\omega h)^{2p+2}),
\] 
there should be at least one solution of \eqref{eq:freq_condition_uniform} of the form $\omega = \kappa_n + \mathcal{O}(\kappa_n^{2p+2}h^{2p+1})$. The condition \eqref{eq:freq_condition_uniform} itself for the numerical frequency $\omega$ can be rearranged to obtain
\begin{equation}
g(\omega h)e^{\kappa_n h} - f(\omega h,1) = 0.
\label{eq:lemma3_1}
\end{equation}
This expression is a polynomial of degree $p+1$ in $\omega$ and, therefore, has up to $p+1$ distinct roots. Regarding $h$ as a small parameter, we have from the form of \eqref{eq:lemma3_1} that we can asymptotically approximate each root using the expansion
\[
\omega = \frac{d_{-1}}{h} + d_0 + d_1 h + \ldots.
\]
Using this expansion in \eqref{eq:lemma3_1}, and expanding $e^{\kappa_n h} = 1 + \kappa_n h + \mathcal{O}((\kappa_n h)^2)$ we obtain
\begin{equation}\label{eq:lemma3_2}
g(d_{-1})-f(d_{-1},1) + g(d_{-1})\kappa_n h  + g'(d_{-1})d_0 h - f'(d_{-1},1)d_0 h +  \mathcal{O}((\kappa_n h)^2) = 0.
\end{equation} 
Setting the powers of $h$ equal to zero we find that we can determine the leading order asymptotic behaviour of each root by finding the possible values of the coefficient $d_{-1}$ which solve
\begin{equation}\label{eq:lemma3_3}
g(d_{-1})-f(d_{-1},1).
\end{equation}
Firstly, evaluating \eqref{eq:Corollary2} at $\omega h_j =0$ gives that $f(0,1) = g(0)$ so $d_{-1} = 0$ is a root of \eqref{eq:lemma3_3}. Furthermore, differentiating \eqref{eq:Corollary2} and evaluating at $\omega h_j =0$ yields that $f'(0,1) \neq g'(0)$ and, hence, $d_{-1} = 0$ is a simple root. Finally, when this Pad\'e approximant was studied in \cite{KQ13}, the authors showed that non-zero roots of the polynomial $g(z) - f(z,1)$ lay in the right-half complex plane. Therefore we can conclude that there are $p$ roots of the form 
\[
\omega_{m} = \frac{\mu_m}{h} + \mathcal{O}(\kappa_n),
\]
where $\mathrm{Re}(\mu_m) > 0$, and one root which corresponds to $d_{-1} = 0$.  Clearly the choice of $d_{-1} =0$ must correspond to the solution $\omega_{0} = \kappa_n + \mathcal{O}(\kappa_n^{2p+2}h^{2p+1})$, thus we obtain the result. 
\end{proof}
From this Theorem we have that for each $\kappa_n$ there are $p+1$ independent polynomial solutions of \eqref{eq:fouriermode_ode} which satisfy $\hat{U}_{j}(1,\omega) = e^{\kappa_n h}\hat{U}_{j-1}(1,\omega)$ for all $j$. One corresponds to $\omega_{0} = \kappa_n + \mathcal{O}(\kappa_n^{2p+2}h^{2p+1}) $ and can be seen as `physical' as it propagates with a numerical frequency which is close to the exact frequency. The other, `non-physical', solutions are dampened out exponentially quickly. This property of the numerical frequencies of the DG method was conjectured by Guo et al in \cite{guo2013} where the authors explicitly calculated similar expansions of the numerical frequencies $\omega_m$ for $p=1,2$ and 3. Since the non-physical modes are damped out exponentially quickly we see that  after sufficiently long times the accuracy of the numerical solution will be completely determined by the accuracy of the physical mode. Hence, if we specifically choose the initial projection of the exact solution to ensure that the physical mode is high-order accurate, we should preserve this accuracy for $t>0$. We formalize this observation in the following theorem. 

\begin{theorem}
Let $u(x,t)$ be a smooth exact solution of \eqref{eq:linear} on the interval $I$ with periodic boundary conditions. Let $U_h$ be the numerical solution of the DG scheme \eqref{eq:coef0} on a uniform mesh of $N$ elements and let $U_j$ be the restriction of the numerical solution to the cell $I_j$. Let $\epsilon_j(\xi,t) = U_{j} - u_j$ be the numerical error on $I_j$ (mapped to the canonical element $[ - 1,1]$). Suppose the projection of the initial profile $u(x,0)$ into the finite element space is chosen such that
\begin{equation}
\int_{-1}^1 \left[U_j(\xi,0) - u_j(\xi,0)\right]P_k(\xi) \; d\xi = \mathcal{O}(h^{2p+1-k}), \quad k = 0, \ldots, p, \label{eq:theorem_initial_condition}
\end{equation}
is satisfied. Then the error on cell $I_j$ will tend exponentially quickly towards the form
\begin{equation} \label{eq:SuperconvergenceForm}
\epsilon_j(\xi,t) = \frac{(-1)^{p+1}}{2} [[U_j]] R_{p+1}^- + \alpha_{p+2}(t) R_{p+1}^{-,(-1)} + \ldots + \alpha_{2p+1}(t)R_{p+1}^{-,(-p)} + \mathcal{O}(h^{2p+2}), 
\end{equation}
where $\alpha_k(t) = \mathcal{O}(h^{k})$
and, in particular, 
\[
\epsilon_j(1,t) = \mathcal{O}(h^{2p+1}).
\]
\end{theorem}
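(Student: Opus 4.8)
The plan is to expand the numerical solution over the $(p+1)N$ polynomial Fourier modes furnished by Theorem~1, discard the $pN$ non-physical modes (which are damped super-exponentially), rewrite each surviving physical mode through the Lemma~2 representation \eqref{eq:fourier_node2}, and use hypothesis \eqref{eq:theorem_initial_condition} to control what is left once the Radau polynomials have been stripped off. Concretely, on each cell write $U_j(\xi,t)=\sum_{n=0}^{N-1}\sum_{m=0}^{p}B_{n,m}\,\hat U_j^{(n,m)}(\xi)\,e^{-a\omega_{n,m}t}$, where $\hat U_j^{(n,m)}$ is the polynomial solution \eqref{eq:particular_solution} attached to $(\kappa_n,\omega_m)$ from Theorem~1 and the coefficients $B_{n,m}$ are determined by the initial projection; since these $(p+1)N$ modes are independent and span the DG space this is legitimate, and smoothness of $u(\cdot,0)$ together with $L^2$-stability of the projection gives enough decay of $B_{n,m}$ in $n$ to add the mode-by-mode estimates. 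In parallel, expand the exact solution on $I_j$ as its Fourier series $u_j(\xi,t)=\sum_{n}\hat u_n\,e^{\kappa_n x(\xi)}e^{-a\kappa_n t}$, with $\hat u_n$ decaying faster than any power of $h$.

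The first real step is to kill the irrelevant modes. By Theorem~1, for $m\ge 1$ one has $\omega_{n,m}=\mu_m/h+\mathcal{O}(\kappa_n)$ with $\mathrm{Re}\,\mu_m>0$, hence $|e^{-a\omega_{n,m}t}|\le Ce^{-ct/h}$ --- this is the ``exponentially quickly'' of the statement, and for $t$ bounded away from $0$ these modes contribute less than $\mathcal{O}(h^{2p+2})$. The exact modes with $|n|\gtrsim 1/h$ are discarded for the same reason. Pairing the $n$-th physical numerical mode with the $n$-th (aliased) exact mode and substituting \eqref{eq:fourier_node2} for $\hat U_j^{(n,0)}$ splits $\epsilon_j$ into a Radau part and a smooth advective part. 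In the Radau part, \eqref{eq:poly_condition} gives $[[\hat U_j^{(n,0)}]]=-\hat U_{j-1}^{(n,0)}(1)(\omega_{n,0}h)^{p+1}/g(\omega_{n,0}h)=\mathcal{O}((\kappa_n h)^{p+1})$, so every $R_{p+1}^{-,(-k)}$ term with $k\ge p+1$ is already $\mathcal{O}(h^{2p+2})$ and may be dropped; collecting the rest over $n$, using $[[u_j]]=0$ (so $[[U_j]]=[[\epsilon_j]]$) and absorbing the negligible modes, one gets $\tfrac{(-1)^{p+1}}{2}[[U_j]]R_{p+1}^-+\sum_{k=1}^{p}\alpha_{p+1+k}(t)R_{p+1}^{-,(-k)}$ with $\alpha_{p+1+k}(t)=\tfrac{(-1)^{p+1}}{2}\sum_n B_{n,0}[[\hat U_j^{(n,0)}]](\omega_{n,0}h/2)^k e^{-a\omega_{n,0}t}=\mathcal{O}(h^{p+1+k})$, i.e. exactly the leading terms of \eqref{eq:SuperconvergenceForm}.

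The main obstacle is the leftover smooth advective part $S_j(\xi,t):=\sum_{n}\big(B_{n,0}\hat U_{j-1}^{(n,0)}(1)e^{\frac{\omega_{n,0}h}{2}(\xi+1)}e^{-a\omega_{n,0}t}-\hat u_n e^{\kappa_n x(\xi)}e^{-a\kappa_n t}\big)$, which must be shown to sit inside the $\mathcal{O}(h^{2p+2})$ remainder. Each factor $e^{\frac{\omega_{n,0}h}{2}(\xi+1)-a\omega_{n,0}t}$ solves $\partial_t v+\tfrac{2a}{h}\partial_\xi v=0$ exactly, so $S$ is transported by the exact flow up to an $\mathcal{O}(h^{2p+2})$ inter-cell mismatch caused by $\omega_{n,0}\neq\kappa_n$ (recall $\hat U_j^{(n,0)}(1)=e^{\kappa_n h}\hat U_{j-1}^{(n,0)}(1)$ from \eqref{eq:freq_condition_uniform}, while $\omega_{n,0}=\kappa_n+\mathcal{O}(\kappa_n^{2p+2}h^{2p+1})$ by Theorem~1), whence $\|S(\cdot,t)\|_\infty\lesssim\|S(\cdot,0)\|_\infty$ uniformly in $t$; and $S_j(\cdot,0)$ is controlled by subtracting the $t=0$ Radau part and the $t=0$ non-physical modes from $U_j(\cdot,0)-u_j(\cdot,0)$ and invoking \eqref{eq:theorem_initial_condition} mode-by-mode --- the hypothesis being precisely what forces the physical part of the initial error down to $\mathcal{O}(h^{2p+1-k})$ in its $k$-th Legendre moment, so that it is invisible to the superconvergent form. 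This is where the work lives: the amplitudes $B_{n,0}$ are only implicitly defined, by inverting the $(p+1)N\times(p+1)N$ initial-projection system, so unwinding their relation to the $\hat u_n$ and verifying that \eqref{eq:theorem_initial_condition} is sharp for the required cancellation --- together with the subsidiary uniform-in-$n$ bound on the $B_{n,m}$ needed to sum the mode estimates --- is the crux.

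Assembling the pieces, for $t>0$ one obtains $\epsilon_j(\xi,t)=\tfrac{(-1)^{p+1}}{2}[[U_j]]R_{p+1}^-(\xi)+\sum_{k=1}^{p}\alpha_{p+1+k}(t)R_{p+1}^{-,(-k)}(\xi)+\mathcal{O}(h^{2p+2})$ plus a term exponentially small in $t/h$, which is \eqref{eq:SuperconvergenceForm} with $\alpha_m(t)=\mathcal{O}(h^m)$. For the last assertion, $R_{p+1}^-(1)=P_{p+1}(1)-P_p(1)=0$ and, by the orthogonality of $R_{p+1}^-$ to $\mathcal P_{p-1}$ used in the proof of Corollary~1, $R_{p+1}^{-,(-k)}(1)=\tfrac{1}{(k-1)!}\int_{-1}^1(1-s)^{k-1}R_{p+1}^-(s)\,ds=0$ for $k=1,\dots,p$; hence every Radau term vanishes at $\xi=1$ and $\epsilon_j(1,t)=\mathcal{O}(h^{2p+2})\subseteq\mathcal{O}(h^{2p+1})$.
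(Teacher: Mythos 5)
Your proposal follows essentially the same route as the paper's proof: expand the data and the numerical solution in Fourier modes, write the DG solution for each wavenumber as a combination of the $p+1$ polynomial solutions of Theorem~1, discard the non-physical modes via the exponential damping coming from $\mathrm{Re}(\omega_m)\sim\mu_m/h>0$, and read off the Radau structure of the surviving physical mode from the representation \eqref{eq:fourier_node2} together with $[[\hat U_j]]=\mathcal{O}((\kappa_n h)^{p+1})$; your handling of the advective mismatch $\omega_{n,0}\neq\kappa_n$ and of $R_{p+1}^{-,(-k)}(1)=0$ for $k\le p$ also matches the paper's.

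The one step you defer as ``the crux'' --- relating the physical amplitudes $B_{n,0}$ to the Fourier coefficients $\hat u_n$ and showing that hypothesis \eqref{eq:theorem_initial_condition} forces $B_{n,0}=\hat u_n+\mathcal{O}(h^{2p+1})$ --- is exactly where the paper spends its remaining effort, and it is lighter than you anticipate: on a uniform mesh the wavenumbers decouple (the projection of $\hat u_n e^{\kappa_n x}$ has the separable form $e^{\kappa_n x_j}\hat U(\xi)$), so no $(p+1)N\times(p+1)N$ inversion is needed, only a $(p+1)$-dimensional identification per $\kappa_n$. Substituting the expansion \eqref{eq:Corollary1} of $f(\omega_0 h,\xi)/g(\omega_0 h)$ into $\int_{-1}^1[U_j(\xi,0)-u_j(\xi,0)]P_k\,d\xi$ and using the orthogonality of $R^-_{p+1}$ reduces \eqref{eq:theorem_initial_condition} to $C_0-\hat u_n=\mathcal{O}(h^{2p+1})$ together with a constraint that the non-physical part of the initial error projects only onto $P_p$ at order $h^{p+1}$; filling in that computation completes your argument. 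Two minor calibration points: your claim that the smooth advective part $S$ sits inside $\mathcal{O}(h^{2p+2})$ overshoots what \eqref{eq:theorem_initial_condition} supplies --- the zeroth moment is only controlled to $\mathcal{O}(h^{2p+1})$, so $C_0-\hat u_n$, and hence $\epsilon_j(1,t)$, is $\mathcal{O}(h^{2p+1})$ rather than $\mathcal{O}(h^{2p+2})$ (which is all the theorem asserts at the downwind point, and is consistent with the $\mathcal{O}(h^{2p+1})$ remainder in the paper's own final display).
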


\begin{proof}
We begin by assuming for simplicity that the exact solution can be written as the sum
\[
u(x,t) = \sum_{n=0}^{N-1} \hat{u}_n e^{\kappa_n (x-at)},
\]
where $\kappa_n = \frac{2\pi ni}{L}$ and $L$ is the length of $I$. The coefficients $\hat{u}_n$ are found by the discrete Fourier transform and satisfy
\[
u(x_j,0) = \sum_{n=0}^{N-1} \hat{u}_n e^{\kappa_n x_j}, \quad j = 1, \ldots, N.
\]
Of course, in general the exact solution cannot be written in such a way but provided $u$ is sufficiently smooth and $N$ is sufficiently large the error in such an approximation should be negligible compared to the error in the polynomial approximation on each cell. Without loss of generality, let us consider the numerical approximation of just one of these Fourier modes, $u(x,t) = \hat{u}_n e^{\kappa_n (x-at)}$. Restricting this Fourier mode to the cell $I_j$ and mapping to the canonical element we see that 
\[
u_j(\xi, t) = \hat{u}_n e^{\kappa_n (x_j - at)} e^{\frac{\kappa_n h}{2}(\xi + 1)}.
\]
Since the mesh is uniform, the projection of $u_j(x,0)$ into the finite element space will be of the form $U_j(\xi,0) = \hat{u}_n e^{\kappa_n x_j} \hat{U}(\xi)$ for every $j$ and we immediately obtain that $U_j(1,t) = e^{\kappa_n h} U_{j-1}(1,t)$ for every $j$. We therefore can express the numerical solution as a sum of the $p+1$ independent polynomial solutions found in Theorem 1 which satisfy $U_j(\xi,t) = \hat{U}_j(\xi,\omega_m)e^{-a\omega_m t}$ and $\hat{U}_j(1,\omega_m) =  e^{\kappa_n h} \hat{U}_{j-1}(1,\omega_m)$, where the $\omega_{m}$ are the $p+1$ distinct values which satisfy $\frac{f(\omega_{m}h,\xi)}{g(\omega_{m}h)} = e^{\kappa_nh}$. Hence
\[
U_j(\xi,t) = \sum_{m=0}^{p} C_{m} e^{\kappa_n x_j-a\omega_{m} t} \frac{f(\omega_{m} h,\xi)}{g(\omega_{m} h)}.
\] 
Since the physical frequency $\omega_0$ is an accurate approximation of the exact frequency $\kappa_n$ to order $\mathcal{O}(\kappa_n^{2p+2}h^{2p+1})$, we have by Corollary 1 the expansion 
\[
C_0\frac{f(\omega_0 h,\xi)}{g(\omega_0 h)} = C_0 e^{\frac{\kappa_n h}{2}(\xi+1)} + C_0\frac{(-1)^{p+1} (\omega_0 h)^{p+1}}{2g(\omega_0 h)} \left[ R^-_{p+1}(\xi) +   \sum_{k=1}^\infty \left(\frac{\omega_0 h}{2}\right)^k R^{-,(-k)}_{p+1}(\xi)  \right]+ \mathcal{O}((\kappa_n h)^{2p+2}).
\]
Therefore performing the initial projection and using this expansion together with the orthogonality of the Radau polynomial $R_{p+1}^-$ we find that
\begin{multline*}
\int_{-1}^1 \left[U_j(\xi,0) - u_j(\xi,0)\right]P_k \; d\xi = \int_{-1}^1 (C_{0} - \hat{u}_n)e^{\kappa_n x_j}e^{\frac{\kappa_n h}{2}(x+1)}P_k \; d\xi \\ + \sum_{m=1}^p  C_{m}e^{\kappa_n x_j} \int_{-1}^1 \frac{f(\omega_{m} h,\xi)}{g(\omega_{m} h)} P_k\; d\xi + \mathcal{O}(\kappa_n^{2p+2}h^{2p+1-k}).
\end{multline*}
Thus, the requirement of the initial projection to satisfy \eqref{eq:theorem_initial_condition} will be satisfied by the choice of $C_{0} = \hat{u}_n + \mathcal{O}(h^{2p+1})$ and $\sum_{m=1}^p C_{m}\frac{f(\omega_{m} h,\xi)}{g(\omega_{m} h)} = \gamma P_p$, where $\gamma = \mathcal{O}(h^{p+1})$. Hence, this initial projection yields a high-order accurate physical mode of the numerical solution. Finally, we know from Theorem 1 that $\omega_1,\ldots,\omega_p$ have positive real parts of order $\mathcal{O}\left(\frac{1}{h}\right)$. Hence these components of the solution are damped out exponentially quickly in time and the numerical solution tends to the form
\begin{equation}\label{eq:Physical_Solution}
U_j(\xi,t) = \hat{u}_n e^{\kappa_n x_j-a\omega_{0} t} \frac{f(\omega_{0} h,\xi)}{g(\omega_{0} h)} + \mathcal{O}(h^{2p+1}).
\end{equation}
We can write this solution in the form \eqref{eq:fourier_node2} to find that
\begin{align}
U_j(\xi,t) &= \hat{u}_n e^{\kappa_n x_j +\frac{\omega_{0} h}{2}(\xi+1) -a\omega_{0} t} + \frac{(-1)^{p+1}}{2} [[U_j]] \left[ R^-_{p+1}(\xi) +   \sum_{k=1}^\infty \left(\frac{\omega_{0} h}{2}\right)^k R^{-,(-k)}_{p+1}(\xi)  \right]  + \mathcal{O}(h^{2p+1}), \nonumber \\
&= u_j(\xi,t) + \frac{(-1)^{p+1}}{2} [[U_j]] \left[ R^-_{p+1}(\xi) +   \sum_{k=1}^\infty \left(\frac{\omega_0 h}{2}\right)^k R^{-,(-k)}_{p+1}(\xi)  \right]  + \mathcal{O}(h^{2p+1}). \nonumber
\end{align}
From this, we see from that the error for the numerical approximation has the form
\[
\epsilon_j(\xi, t) =  \frac{(-1)^{p+1}}{2} [[U_j]] \left[ R^-_{p+1}(\xi) +   \sum_{k=1}^\infty \left(\frac{\omega_{0} h}{2}\right)^k R^{-,(-k)}_{p+1}(\xi)  \right]  + \mathcal{O}(h^{2p+1}),
\]
and since this is true for any Fourier node $\hat{u}_n e^{\kappa_n x}$, we obtain the result by summing this expression over all Fourier modes. 
\end{proof}

From \eqref{eq:SuperconvergenceForm} we see that the numerical solution will be one order more accurate at points $\xi_0$ such that $R_{p+1}^-(\xi_0) =0$, i.e. the roots of the right Radau polynomial. This is refered to as the spatial superconvergence of the method. From the proof of Theorem 2 we see that in order for the numerical error to tend to a superconvergent form the key requirement of the initial projection is that it projects onto the physical mode with high order accuracy. For example, an initial projection which consists of simply interpolating the initial data at equidistant points will not satisfy the conditions of Theorem 2 and thus we do not observe superconvergence of the numerical solution at the downwind points at any time. 

Examining the superconvergent form \eqref{eq:SuperconvergenceForm} we establish some useful corollaries. First, we note that once the non-physical modes have been damped out the remaining physical modes will be advected with order $h^{2p+1}$ accuracy. Hence, for the physical modes, the DG method can be viewed as an order $2p+1$ scheme. Second, since the initial projection in Theorem 2 produces a high-order accurate physical mode, and due to the orthogonality properties of the Radau polynomials, we also obtain high-order accuracy of the moments of the numerical solution. We states the results formally below. 

\begin{cor}
The accumulation error of the superconvergent numerical solution \eqref{eq:Physical_Solution} is of order $2p+1$. That is, after sufficiently long time that the non-physical modes of the numerical solution have been damped out, the numerical solution satisfies
\[
|| U_{j+1}(\xi, t+ah) - U_j(\xi,t)|| = \mathcal{O}(h^{2p+1}).
\]
\end{cor}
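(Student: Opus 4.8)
The plan is to read the estimate directly off the explicit form \eqref{eq:Physical_Solution} of the physical mode furnished by Theorem 2, together with the frequency expansion of Theorem 1. As in the proof of Theorem 2 it suffices to treat a single Fourier mode $u(x,t)=\hat{u}_n e^{\kappa_n(x-at)}$ with $\kappa_n$ held fixed as $h\to 0$; the finitely many modes in the truncated expansion of $u$ are recombined at the end, the smoothness of $u$ guaranteeing that $\sum_n|\hat{u}_n|\,\kappa_n^{2p+2}<\infty$ so that the single-mode estimates add up. Fix such a mode, and a time large enough that the non-physical components have been damped out. Then Theorem 2 gives, for every cell $I_j$,
\[
U_j(\xi,t)=\hat{u}_n\, e^{\kappa_n x_j-a\omega_0 t}\,\frac{f(\omega_0 h,\xi)}{g(\omega_0 h)}+\mathcal{O}(h^{2p+1}),
\]
where $\omega_0=\kappa_n+\mathcal{O}(\kappa_n^{2p+2}h^{2p+1})$ by Theorem 1, and where $\|U_j(\xi,t)\|=\mathcal{O}(1)$ because $U_j$ approximates the bounded exact solution $u$.

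The computation is then immediate. On cell $I_{j+1}$ at time $t+h/a$ --- where $h/a$ is the time the exact solution needs to cross one cell --- the exponent $\kappa_n x_j-a\omega_0 t$ in the main term above is replaced by $\kappa_n x_{j+1}-a\omega_0(t+h/a)=(\kappa_n x_j-a\omega_0 t)+(\kappa_n-\omega_0)h$, using $x_{j+1}=x_j+h$ and $a\omega_0\cdot(h/a)=\omega_0 h$. Hence
\[
U_{j+1}(\xi,t+h/a)=e^{(\kappa_n-\omega_0)h}\,U_j(\xi,t)+\mathcal{O}(h^{2p+1}).
\]
Since $\kappa_n$ is fixed, $(\kappa_n-\omega_0)h=\mathcal{O}(\kappa_n^{2p+2}h^{2p+2})=\mathcal{O}(h^{2p+2})$, so $e^{(\kappa_n-\omega_0)h}=1+\mathcal{O}(h^{2p+2})$; subtracting and using $\|U_j(\xi,t)\|=\mathcal{O}(1)$ yields
\[
\bigl\|U_{j+1}(\xi,t+h/a)-U_j(\xi,t)\bigr\|=\mathcal{O}(h^{2p+2})+\mathcal{O}(h^{2p+1})=\mathcal{O}(h^{2p+1}),
\]
and recombining over the Fourier modes gives the corollary.

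There is essentially no obstacle here beyond what Theorems 1 and 2 already supply: the genuine one-cell drift of the physical mode, coming from the frequency error $\kappa_n-\omega_0$, is only $\mathcal{O}(h^{2p+2})$, so the exponent $2p+1$ in the corollary is simply the precision to which \eqref{eq:Physical_Solution} represents the physical mode (equivalently, the $\mathcal{O}(h^{2p+1})$ uncertainty in the physical amplitude $C_0-\hat{u}_n$ left over from the initial projection \eqref{eq:theorem_initial_condition}), passed through unchanged. The two things requiring a word of care are both inherited from the proof of Theorem 2: making precise the reduction to a single Fourier mode and the uniformity of the $\mathcal{O}$-estimates across modes (which rests on the smoothness of $u$ and the uniformity of the mesh, by virtue of which $\omega_0$ and the propagation law $\hat{U}_{j+1}(1,\omega_0)=e^{\kappa_n h}\hat{U}_j(1,\omega_0)$ are the same on every cell), and quantifying "sufficiently long time" --- by Theorem 1 the non-physical modes decay like $e^{-c\,t/h}$ with $c>0$, so for any fixed $t>0$ they drop below $h^{2p+1}$ once $h$ is small enough.
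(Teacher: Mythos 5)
Your argument is correct and is essentially the one the paper intends: the paper states this corollary without a separate proof, treating it as immediate from the form \eqref{eq:Physical_Solution} and the remark that the surviving physical mode is advected with order $2p+1$ accuracy, and your computation (the per-cell phase drift $(\kappa_n-\omega_0)h=\mathcal{O}(h^{2p+2})$ from Theorem 1, leaving only the $\mathcal{O}(h^{2p+1})$ residual of \eqref{eq:Physical_Solution}) is exactly the fleshed-out version of that remark. Your use of the time shift $h/a$ in place of the paper's $ah$ is the physically correct reading (they coincide in the paper's experiments, where $a=1$).
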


\begin{cor}
The superconvergent form of the numerical solution \eqref{eq:SuperconvergenceForm} has the property that the $m$-th moment of the error is order $2p+1-m$, i.e.
\[
\int_{-1}^1 \epsilon_j(\xi,t) P_m \; d\xi = \mathcal{O}(h^{2p+1-m}).
\]
\end{cor}

In the next section we perform several numerical test to confirm the results of Theorem 2 and Corollaries 2 and 3.


\section{Numerical Examples} 
In this section we will perform several numerical experiments to confirm the superconvergence properties stated in the section above for the DG method for the linear advection equation. Specifically, we will confirm that on a uniform mesh the numerical solution of the DG method with a non-superconvergent initial projection will tend exponentially quickly towards the superconvergent form \eqref{eq:SuperconvergenceForm}. Moreover, we will show that when $t$ is sufficiently large the superconvergent numerical solution is advected at order $\mathcal{O}(h^{2p+1})$. Finally, we will show that the moments of the numerical error are also high order accurate after sufficiently long time $t$. 

Our numerical studies were done on the initial value problem
\begin{eqnarray}\label{eq:linear_num}
u_t + u_x &=& 0, \qquad  -1 \leq x < 1,\qquad t\geq 0, \\
u(x,0) &=& u_0(x), \nonumber \\
u(-1,t) &=& u(1,t),\nonumber
\end{eqnarray}
with
\begin{equation}\label{eq:sin1D}
u_0(x) = \sin 4\pi x.
\end{equation}
All test below are calculated using an RK-4 time-stepping scheme and a $CFL$ number of $\frac{0.15}{2p+1}$ to minimize the error incurred in time integration. 

\subsubsection*{Superconvergence from more general initial projections}
In the proof of Theorem 1 we showed that the non-physical waves are damped out like $e^{-\frac{a\mu_m t}{h}}$.  We therefore expect to observe that a numerical solution with an initial projection satisfying the conditions of Theorem 2, to have converged to the superconvergent form \eqref{eq:SuperconvergenceForm} when 
\[
e^{-\frac{a \mu_{min} t}{h}} = \mathcal{O}(h^{2p+1}),
\]
where $\mu_{min}$ is the non-physical numerical frequency with the smallest real part. Therefore, we expect that the numerical solution will be superconvergent when
\[
t = -\frac{2p+1}{a\mu_{min}} \mathcal{O}(h \log h).
\]
We can estimate the smallest real part of the non-physical numerical frequencies by explicitly calculating the roots of the polynomial $g(z) - f(z,1)$ and finding the root with the smallest non-zero real part. This calculation for $p=1,2,3$, and 4 yields  $\mu_{min} = 6, 3, 0.42, $ and 0.058, respectively. Therefore, we see that the smallest real part of the non-physical numerical frequencies is decreasing very rapidly as the order $p$ increases. Hence we expect that it will take significantly longer for the non-physical modes to be damped out as the order of the DG method increases. 
\begin{figure}[t!]
\centering 
\begin{subfigure}[h]{8.5cm} 
\includegraphics[height=7cm,keepaspectratio=true]{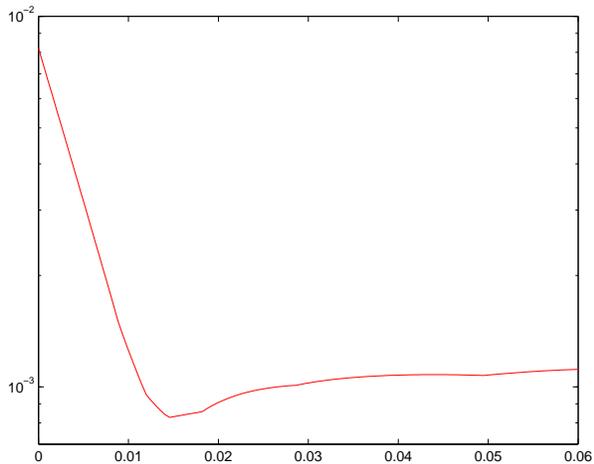}
\caption{$p=1$}
\end{subfigure} 
\quad
\begin{subfigure}[h]{8.5cm} 
\includegraphics[height=7cm,keepaspectratio=true]{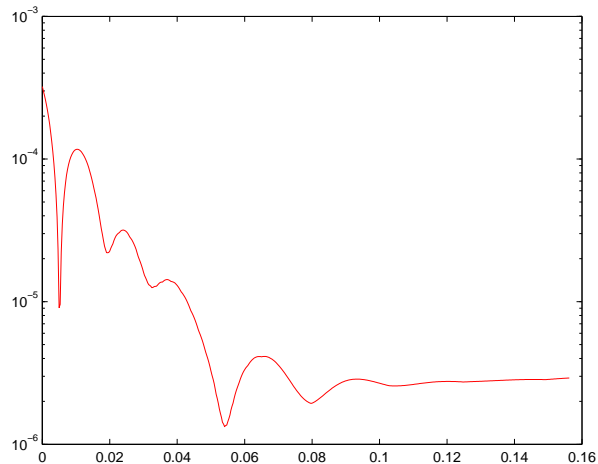}
\caption{$p=2$}
\end{subfigure}
\newline
\begin{subfigure}{8.5cm}
\includegraphics[height=7cm,keepaspectratio=true]{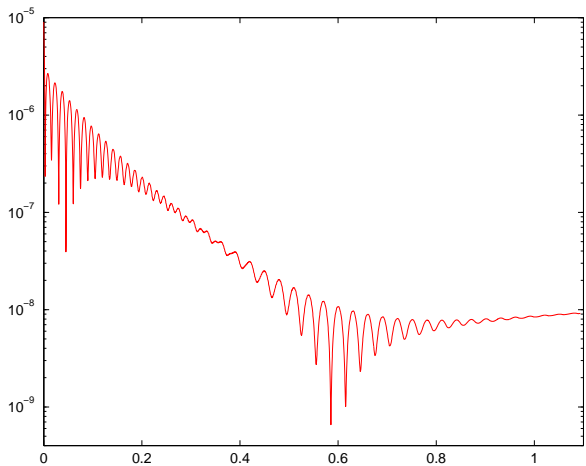} 
\caption{$p=3$}
\end{subfigure}
\caption{Semi-log plots of $L^1$ norm of the point-wise error at the downwind points of the numerical solution with $L^2$ initial projection as a function of time. Solutions are calculated for the linear advection, \eqref{eq:linear_num}-\eqref{eq:sin1D} on a uniform mesh of $N=64$ elements, $CFL = \frac{0.05}{2p+1}$.}
\label{fig:L1DownwindError}
\end{figure}

In Figure \ref{fig:L1DownwindError} we show the error at the downwind point of the numerical solution as a function of time for the $p=1,2,$ and 3 schemes on a uniform mesh of $N=64$ elements with the usual $L^2$ initial projection. The error at the downwind point is calculated using the $L^1$ norm of the point-wise numerical errors at the downwind points, i.e. $||E|| = h\sum_j |U_j(1,t) - u_j(1,t)|$. We notice from the linear shape of the semi-log plots that the error at the downwind point decays exponentially up to some critical time, at which point the error remains relatively constant. We also notice that due to the scaling of $\mu_{min}$ it takes significantly longer for the error at the downwind points to reach this critical time as $p$ increases. In the following numerical test we show that once this critical time is reached the error at the downwind points is $\mathcal{O}(h^{2p+1})$. 

\begin{table}[t]
\begin{center}
\begin{tabular}{|c|c|c|c|c|c|c|c|c|}\hline
& \multicolumn{4}{c|}{$L^2$ Projection} &\multicolumn{4}{c|}{Left Radau Projection}\\
\hline
$N$ &$||E||$ & $r$ & $||\bar{\epsilon}_0||$ & $r$ & $||E||$ & $r$ & $||\bar{\epsilon}_0||$ & $r$  \\
\hline\hline
16 &  7.02e-02 &    - & 6.66e-02 &    - & 9.63e-02 &    -  &  1.22e-01 &    -  \\
\hline
32 &  8.40e-03 & 3.06 & 8.90e-03 & 2.90 & 1.22e-02 & 2.98  &  1.68e-02 & 2.86 \\
\hline
64 &  1.04e-03 & 3.01 & 1.08e-03 & 3.04 & 1.54e-03 & 2.99  &  2.13e-03 & 2.98 \\
\hline
128 & 1.30e-04 & 3.00 & 1.34e-04 & 3.01 & 1.93e-04 & 2.99  &  2.67e-04 & 3.00 \\
\hline
256 & 1.63e-05 & 2.99 & 1.67e-05 & 3.00 & 2.43e-05 & 2.99  &  3.33e-05 & 3.00 \\
\hline
\end{tabular}
\end{center}
\caption{Linear advection, \eqref{eq:linear_num}-\eqref{eq:sin1D} with $p=1$ and with the $L^2$ and left Radau initial projections. $L^1$ error of the downwind points $||E||$ and of the cell averages $||\epsilon_0||$ are shown together with convergence rates, $r$. Errors are calculated at $t=h$.}
\label{ta:SinExactP1}
\end{table}

\begin{table}[t]
\begin{center}
\begin{tabular}{|c|c|c|c|c|c|c|c|c|}\hline
& \multicolumn{4}{c|}{$L^2$ Projection} &\multicolumn{4}{c|}{Left Radau Projection}\\
\hline
$N$ &$||E||$ & $r$ & $||\bar{\epsilon}_0||$ & $r$ & $||E||$ & $r$ & $||\bar{\epsilon}_0||$ & $r$  \\
\hline \hline
16 &  5.87e-03 &    - & 7.96e-03 &    - & 6.65e-03 &    - & 7.66e-03 &    -   \\
\hline
32 &  1.10e-04 & 5.72 & 1.86e-04 & 5.42 & 1.38e-04 & 5.59 & 2.20e-04 & 5.12  \\
\hline
64 &  2.74e-06 & 5.34 & 4.04e-06 & 5.52 & 3.57e-06 & 5.27 & 5.54e-06 & 5.31  \\
\hline
128 & 8.01e-08 & 5.10 & 1.10e-07 & 5.20 & 1.06e-07 & 5.07 & 1.60e-07 & 5.12  \\
\hline
256 & 2.47e-09 & 5.01 & 3.28e-09 & 5.07 & 3.31e-09 & 5.00 & 4.87e-09 & 5.03  \\
\hline
\end{tabular}
\end{center}
\caption{Linear advection, \eqref{eq:linear_num}-\eqref{eq:sin1D} with $p=2$ and with the $L^2$ and left Radau initial projections. $L^1$ error of the downwind points $||E||$ and of the cell averages $||\epsilon_0||$ are shown together with convergence rates, $r$. Errors are calculated at $t=4h$.}
\label{ta:SinExactP2}
\end{table}
\begin{table}[t]
\begin{center}
\begin{tabular}{|c|c|c|c|c|c|c|c|c|}\hline
& \multicolumn{4}{c|}{$L^2$ Projection} &\multicolumn{4}{c|}{Left Radau Projection}\\
\hline
$N$ &$||E||$ & $r$ & $||\bar{\epsilon}_0||$ & $r$ & $||E||$ & $r$ & $||\bar{\epsilon}_0||$ & $r$  \\
\hline\hline
16 &  5.14e-04 &    - & 1.05e-03 &    - & 5.14e-04 &    - & 1.06e-03 &    -   \\
\hline
32 &  2.36e-06 & 7.76 & 4.39e-06 & 7.90 & 2.30e-06 & 7.80 & 4.39e-06 & 7.91  \\
\hline
64 &  9.17e-09 & 8.00 & 1.77e-08 & 7.95 & 9.09e-09 & 7.99 & 1.82e-08 & 7.91  \\
\hline
128 & 3.63e-11 & 7.97 & 6.93e-11 & 8.00 & 3.53e-11 & 8.00 & 7.13e-11 & 8.00  \\
\hline
256 & 2.75e-13 & 7.05 & 6.53e-13 & 6.73 & 2.99e-13 & 6.88 & 5.83e-13 & 6.93  \\
\hline
\end{tabular}
\end{center}
\caption{Linear advection, \eqref{eq:linear_num}-\eqref{eq:sin1D} with $p=3$ and with the $L^2$ and left Radau initial projections. $L^1$ error of the downwind points $||E||$ and of the cell averages $||\epsilon_0||$ are shown together with convergence rates, $r$. Errors are calculated at $t=35h$.}
\label{ta:SinExactP3}
\end{table}

In Tables \ref{ta:SinExactP1}-\ref{ta:SinExactP3} we show the results of our convergence test for $p=1,2,$ and 3. In each table we present the $L^1$ errors at the downwind points of the cells $||E||$, and the $L^1$ norm of the numerical errors in the cell averages, calculated as
\[
||\bar{\epsilon}_0|| = h \sum_{j=1}^N \left| \int_{-1}^1 \left( U_j - u_j \right) \; d\xi \right| .
\]
The errors are calculated at $t = h, 4h,$ and $ 35h$ for the $p=1,2,$ and $3$ methods, respectively, in order to allow sufficient time for the non-physical modes to dampen out. We calculate these errors for two different initial projections.  The first is the usual $L^2$ projection while the second is a left Radau-like projection which is defined by 
\[
\int_{-1}^1 (U_j - u_j)P_k \; d\xi = 0, \quad k = 0, \ldots, p-1,
\]
and
\[
U_j(-1,0) = u_j(-1,0).
\]
These projections, while satisfying the conditions of Theorem 2, are far from the superconvergent form \eqref{eq:SuperconvergenceForm} which can be viewed as close to a right Radau projection of the exact solution. In each table we observe the expected $2p+1$ rate of convergence in both the error at the downwind points of the cells and in the cell averages.  

\subsubsection*{Order $2p+1$ advection of superconvergent solution}

\begin{table}[t]
\begin{center}
\begin{tabular}{|c|c|c|c|c|}
\hline
$N$ & $||U(x,0)-U(x,2)||$ & $r$ &$||U(x,2)-U(x,4)||$ & $r$  \\
\hline\hline
16 &  9.16e-03 &    - & 6.59e-03 &    -  \\
\hline
32 &  2.34e-03 & 1.96 & 8.34e-04 & 2.98  \\
\hline
64 &  5.90e-04 & 1.99 & 1.05e-04 & 3.00  \\
\hline
128 & 1.48e-04 & 2.00 & 1.31e-05 & 3.00  \\
\hline
\end{tabular}
\end{center}
\caption{Linear advection, \eqref{eq:linear_num}-\eqref{eq:sin1D} with $p=1$ and $L^2$ initial projection. $L^1$ norms of difference in numerical solutions at different times. Differences are measured between $U_j$ initially and at $t=2$, after one period, then between $U_j$ at $t=2$ and $t=4$, after an additional period.}
\label{ta:Physical_ModeP1}
\end{table}
\begin{table}[t]
\begin{center}
\begin{tabular}{|c|c|c|c|c|}
\hline
$N$ & $||U(x,0)-U(x,2)||$ & $r$ &$||U(x,2)-U(x,4)||$ & $r$  \\
\hline\hline
16 &  2.87e-04 &    - & 1.03e-05 &    -  \\
\hline
32 &  3.57e-05 & 3.00 & 3.24e-07 & 4.99  \\
\hline
64 &  4.46e-06 & 3.00 & 1.01e-08 & 5.00  \\
\hline
128 & 5.58e-07 & 3.00 & 3.17e-10 & 5.00  \\
\hline
\end{tabular}
\end{center}
\caption{Linear advection, \eqref{eq:linear_num}-\eqref{eq:sin1D} with $p=2$ and $L^2$ initial projection. $L^1$ norms of difference in numerical solutions at different times. Differences are measured between $U_j$ initially and at $t=2$, after one period, then between $U_j$ at $t=2$ and $t=4$, after an additional period.}
\label{ta:Physical_ModeP2}
\end{table}

Next, we show that once the non-physical modes of the numerical solution have been damped out, the remaining modes are advected at order $2p+1$. To show this we use the $L^2$ initial projection and calculate the norm of the difference between numerical solutions after $0,1$, and $2$ periods. That is, we calculate these differences as
\[
||U(x,0)-U(x,2)|| = h\sum_{j=1}^N \int_{-1}^{1} \left| U_j(\xi,0) - U_j(\xi,2)\right|\; d\xi.
\]
In Tables \ref{ta:Physical_ModeP1} and \ref{ta:Physical_ModeP2} we see that that the difference between the numerical solution initially and after one period converges at the usual $p+1$ rate. This is expected since the non-physical modes of the solution are present initially, and are $\mathcal{O}(h^{p+1})$. However, we also see that that the difference between the numerical solution after one and two periods converges with order $2p+1$. This shows that once the non-physical modes of the solution have been damped out, the remaining physical modes are advected at order $2p+1$. 

\subsubsection*{Superconvergence of moments}

\begin{table}[t]
\begin{center}
\begin{tabular}{|c|c|c|c|c|c|c|c|c|}\hline
& \multicolumn{4}{c|}{$L^2$ Projection} &\multicolumn{4}{c|}{Left Radau Projection}\\
\hline
$N$ &$||\bar{\epsilon}_{1}||$ & $r$ & $||\bar{\epsilon}_{2}||$ & $r$ & $||\bar{\epsilon}_{1}||$ & $r$ & $||\bar{\epsilon}_{2}||$ & $r$  \\
\hline\hline
16 &  2.92e-03 &    - & 8.27e-03 &    - & 3.24e-03 &    -  &  8.06e-03 &    -  \\
\hline
32 &  1.12e-04 & 4.70 & 1.04e-03 & 2.99 & 1.04e-04 & 4.96  &  1.04e-03 & 2.95 \\
\hline
64 &  8.09e-06 & 3.79 & 1.29e-04 & 3.01 & 7.97e-06 & 3.70  &  1.29e-04 & 3.01 \\
\hline
128 & 5.21e-07 & 3.96 & 1.61e-05 & 3.00 & 5.19e-07 & 3.94  &  1.61e-05 & 3.00 \\
\hline
256 & 3.28e-08 & 3.99 & 2.00e-06 & 3.00 & 3.27e-08 & 3.99  &  2.01e-06 & 3.00 \\
\hline
\end{tabular}
\end{center}
\caption{Linear advection, \eqref{eq:linear_num}-\eqref{eq:sin1D} with $p=2$ and with the $L^2$ and left Radau initial projections. $L^1$ norms of the first and second moments of the numerical error are shown together with convergence rates, $r$. Errors are calculated at $t=4h$.}
\label{ta:MomentsP2}
\end{table}

Finally, we demonstrate the high order accuracy in the moments of the numerical error for $p=2$ in Table \ref{ta:MomentsP2}. We present the $L^1$ norm of the first and second moments of the numerical error in each cell. The moments are calculated as
\[
||\bar{\epsilon}_{m}|| = h\sum_{j=1}^N \left| \int_{-1}^{1} \left( U_j - u_j\right) P_m \; d\xi \right|.
\] 
The moments are calculated from the numerical solution using the usual $L^2$ initial projection and the left Radau-like projection, as above. From this table we see that the $m$-th moment of the numerical error does indeed achieve the predicted order $2p +1 -m$ convergence rate. 

\section{Discussion}
We have shown that the numerical solution of the DG method is closely related to the $\frac{p}{p+1}$ Pad\'e approximant of the exponential function $e^z$. Indeed, by finding the Fourier modes of the PDE \eqref{eq:coef3} which governs the numerical solution we have shown that the polynomial solutions are completely described by the value of the solution at the downwind point of the previous cell and the rational function $\frac{f(\omega h_j,\xi)}{g(\omega h_j)}$. This rational function has a local expansion in $h_j$ in terms of the $(p+1)$-th right Radau polynomial and the anti-derivatives of this polynomial. Furthermore, at the downwind point of the cell we have that $\frac{f(\omega h_j,\xi)}{g(\omega h_j)}$ is the $\frac{p}{p+1}$ Pad\'e approximant of $e^z$. As discussed in \cite{HuAtkins02} and \cite{Ainsworth04}, the accuracy of this Pad\'e approximant is what gives rise to the high order accuracies in both dissipation and dispersion of the DG scheme known as superaccuracy. The expansion of this rational function in terms of the right Radau polynomial and its anti-derivatives is what we observe to be the superconvergence of the numerical solution at the right Radau points and the order $2p+1$ superconvergence of the downwind point in each cell. Finally, as studied in \cite{KQ13} and shown by equation \eqref{eq:freq_condition}, the spectrum of the DG discretization matrix is directly related to this $\frac{p}{p+1}$ Pad\'e approximant. Therefore, these Fourier modes provide a direct connection between the three seemingly disparate properties of superaccuracy, superconvergence, and the spectrum of the DG method.   

We have shown that for a uniform computational mesh of $N$ elements there exist $N$ polynomial solutions which can be viewed as physical components of the numerical wave and $pN$ polynomial solutions which are non-physical components. Moreover, these non-physical solutions are damped out exponentially quickly in time and, therefore, neglecting time integration errors we can conclude that the accuracy of the numerical solution for sufficiently large times is completely determined by the accuracy of the initial projection of the exact solution onto the physical modes. Furthermore, the DG scheme can be viewed as order $2p+1$ accurate on these physical solutions. Using this result we proved that for a class of initial projections of the exact solution we expect to obtain a numerical solution which is superconvergent at both the roots of the right Radau polynomial and the downwind points of the cell, after sufficiently long times. In particular, there is a class of initial projections which do not initially have order $h^{2p+1}$ accuracy at the downwind point, but will indeed obtain this order of accuracy after sufficient time has elapsed. For these projections the points of superconvergence will migrate to the roots of the right Radau polynomial exponentially quickly in time.    

We intend to investigate whether this approach would be useful in both identifying the superconvergence and superaccuracy properties of the DG scheme in non-linear and multidimensional problems, and in determining the spectral properties of the DG method in these problems.

\bibliographystyle{plain}
\bibliography{dg}

\end{document}